\documentclass[11pt,reqno]{amsart}
\usepackage{amsmath,amssymb}
\usepackage{mathrsfs}
\usepackage{amsfonts}
\usepackage[dvipsnames]{xcolor}
\usepackage{graphicx}
\usepackage{float}  
\usepackage{cite}
\usepackage{cases}
\usepackage{indentfirst}
\allowdisplaybreaks

\newtheorem{theorem}{Theorem}[section]
\newtheorem{lemma}[theorem]{Lemma}
\newtheorem{corollary}[theorem]{Corollary}

\newtheorem{remark}[theorem]{Remark}

\numberwithin{equation}{section}

\makeatletter
\usepackage[colorlinks=true,citecolor=blue,linkcolor=blue]{hyperref}

\begin{document}
\author{Yasheng Lyu}

\title{\textbf{Second-order estimates for degenerate complex $k$-Hessian and Christoffel-Minkowski equations} }

\address{School of Mathematics and Statistics, Xi'an Jiaotong University, Xi'an, Shaanxi 710049, People's Republic of China}

\email{lvysh21@stu.xjtu.edu.cn}

\begin{abstract}
It is known that the complex $k$-Hessian equation admits almost $C^{1,1}$ regularity (i.e., $\sup\Delta u<\infty$) and the Christoffel-Minkowski equation admits $C^{1,1}$ regularity under the sharp degenerate condition $f^{1/(k-1)}\in C^{1,1}$ for a nonnegative right-hand side $f$.
Assuming instead the alternative sharp degenerate condition $f^{3/(2k-2)}\in C^{2,1}$, we prove almost $C^{1,1}$ regularity for the complex $k$-Hessian equation when $k\geq5$ and $C^{1,1}$ regularity for the Christoffel-Minkowski equation.
The argument deeply exploits various concavity properties of the operators under the stronger regularity assumption on $f$.
\end{abstract}

\keywords{Complex $k$-Hessian equation, Christoffel-Minkowski equation, Degenerate elliptic equations, Regularity.}

\subjclass[2020]{Primary 35J60; Secondary 35B45, 32W20, 32Q15.}
 \date{}
\maketitle

\pagestyle{myheadings}
\markboth{$~$ \hfill{\uppercase{Yasheng Lyu}}\hfill $~$}{$~$ \hfill{\uppercase{Degenerate complex $k$-Hessian and Christoffel-Minkowski equations}}\hfill $~$}

\section{Introduction}

The complex Monge-Amp\`ere equation plays a central role in complex geometry.
In a celebrated article, Yau \cite{Yau1978} (and independently Aubin \cite{Aubin1978}) solved the Calabi conjecture by studying the complex Monge-Amp\`ere equation on compact K\"ahler manifolds.

Let $(M,\omega)$ be a compact K\"ahler manifold of complex dimension $n\geq2$, where $\omega$ is the K\"ahler form on $M$.
The complex Monge-Amp\`ere equation on $M$ takes the form
\begin{equation}\label{eqn1.1}
(dd^{c}u+\omega)^{n}=f\omega^{n},
\end{equation}
where $d=\partial+\bar{\partial}$, $d^{c}=\sqrt{-1}(\bar{\partial}-\partial)$, and $\omega^{n}=\omega\wedge\cdots\wedge\omega$.
The compatibility condition for \eqref{eqn1.1} is 
\[
\int_{M}f\omega^{n}=\int_{M}\omega^{n}.
\]
We consider the complex $k$-Hessian equation
\begin{equation}\label{eqn1.2}
\binom{n}{k}(dd^{c}u+\omega)^{k}\wedge\omega^{n-k}=f\omega^{n},
\end{equation}
where $2\leq k\leq n$.
The compatibility condition for \eqref{eqn1.2} is 
\begin{equation}\label{eqn1.3}
\int_{M}f\omega^{n}=\binom{n}{k}\int_{M}\omega^{n}.
\end{equation}
As usual, we normalize $u$ by 
\begin{equation}\label{eqn1.4}
\int_{M}u\omega^{n}=0.
\end{equation}
Following Hou-Ma-Wu \cite{Hou2010}, a function $u\in C^{2}(M)$ is called \emph{$k$-admissible} if 
\[
\frac{(dd^{c}u+\omega)^{j}\wedge\omega^{n-j}}{\omega^{n}}>0\quad \text{for all}\ 1\leq j\leq k.
\]

For an $n\times n$ Hermitian matrix $A$, let $\sigma_{k}(A)$ denote the $k$-th elementary symmetric function of its eigenvalues,
\[
\sigma_{k}(A):=\sigma_{k}(\lambda(A)):=\sum_{1\leq i_{1}<\cdots<i_{k}\leq n}\lambda_{i_{1}}\cdots\lambda_{i_{k}} 
\]
for $k=1,2,\dots,n$, where $\lambda(A)=(\lambda_{1},\dots,\lambda_{n})$ denotes the (real) eigenvalues of $A$. 
The G{\aa}rding cone is defined by 
\[
\Gamma_{k}:=\left\{\lambda\in\mathbb{R}^{n}:\ \sigma_{j}(\lambda)>0\ \text{for all}\ 1\leq j\leq k\right\}.
\]
The operator $\sigma_{k}^{1/k}$ is elliptic and concave in $\Gamma_{k}$.
Since $d\omega=0$, locally there exists a smooth plurisubharmonic function $g$ such that
\[
\omega=dd^{c}g=2\sum_{i,j=1}^{n}g_{i\bar{j}}\sqrt{-1}dz_{i}\wedge d\bar{z}_{j}.
\]
If $u$ is smooth, then equation \eqref{eqn1.2} can be written as 
\begin{equation}\label{eqn1.5}
\sigma_{k}(W)=f,
\end{equation}
where 
\[
W:=\Big(g^{i\bar{j}}\Big)\big(w_{i\bar{j}}\big),\quad \Big(g^{i\bar{j}}\Big):=\big(g_{i\bar{j}}\big)^{-1},\quad w:=u+g.
\]
Moreover, $u$ is $k$-admissible if and only if $\lambda(W)\in\Gamma_{k}$.

The equation \eqref{eqn1.2} is called \emph{degenerate} if the nonnegative function $f$ is allowed to vanish at some points on $M$, and \emph{non-degenerate} if $\inf_{M}f>\varepsilon$ for some $\varepsilon>0$. 
We focus on the degenerate case, where the optimal exponent in the regularity assumptions on $f$ becomes delicate near $\{f=0\}$.
The condition $f^{1/k}\in C^{1,1}$ arises naturally from the concavity of the operator $\sigma_{k}^{1/k}$, but it is not optimal for the existence of $C^{1,1}$ solutions.
In fact, the optimal exponent is closely connected to the concavity structure.
The conditions
\[
f^{\frac{1}{k-1}}\in C^{1,1}\quad \text{and}\quad f^{\frac{3}{2(k-1)}}\in C^{2,1}
\] 
are sharp, as demonstrated by counterexamples in Wang \cite{Wang1995}, Ivochkina-Trudinger-Wang \cite{Wang2004}, Pli\'s \cite{Plis2005}, and Dinew-Pli{\'s}-Zhang \cite{Zhang2019}. 
The condition $f^{1/(k-1)}\in C^{1,1}$ has been extensively investigated; see, for instance, Guan-Li \cite{Guan1997a}, Guan \cite{Guan1997}, Guan-Trudinger-Wang \cite{Guan1999}, Li-Weinstein \cite{Li1999}, Dong \cite{Dong2006}, Guan-Zhang \cite{Guan2021}, Jiao-Wang \cite{Jiao2022,Jiao2024}, Jiao-Sun \cite{Jiao2022b}, Jiao-Jiao \cite{Jiao2024b}, and references therein.
To establish the existence of $C^{1,1}$ solutions, a common strategy is to derive $C^{2}$ a priori estimates for non-degenerate problems with constants independent of $\inf_{M}f$.
One then obtains $C^{1,1}$ solutions in the degenerate case by an approximation argument. 

We establish the following second-order a priori estimate for \eqref{eqn1.2}.

\begin{theorem}\label{thm1.1}
Let $u\in C^{4}(M)$ be a $k$-admissible solution of \eqref{eqn1.2}.
Assume that \eqref{eqn1.3} and \eqref{eqn1.4} hold. 
Let $k\geq5$ and $f^{3/(2k-2)}\in C^{2,1}(M)$ with $\inf_{M}f>0$.
Then 
\begin{equation}\label{eqn1.6}
\sup_{M}\Delta u\leq C\left(\sup_{M}|\nabla u|^{2}+1\right),
\end{equation}
where $C>0$ depends on $\|u\|_{C^{0}(M)}$, $\|f^{3/(2k-2)}\|_{C^{2,1}(M)}$, and the geometry of $(M,\omega)$, but is independent of $\inf_{M}f$.
\end{theorem}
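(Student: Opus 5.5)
We apply the maximum principle to a test function of Hou--Ma--Wu type. We modify it so that the bad terms coming from $f$ are absorbed using only $f^{3/(2k-2)}\in C^{2,1}$. Set $F(W)=\sigma_k(W)$ and $F^{i\bar j}=\partial\sigma_k/\partial w_{i\bar j}$. We consider
\[
Q=\log\lambda_1+\varphi(|\nabla u|^2)+\psi(u),
\]
where $\lambda_1$ is the largest eigenvalue of $W$, perturbed in the standard way to make it smooth. We take $\varphi(t)=-\tfrac12\log(1-t/(2K))$ with $K=\sup|\nabla u|^2+1$, and $\psi(u)=-A\log(1+u-\inf u)$ or a linear $-Au$. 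Suppose $Q$ attains its maximum at $x_0$. In holomorphic normal coordinates with $W$ diagonal at $x_0$, we differentiate the equation twice in the $e_1$ direction:
\[
F^{i\bar i}w_{i\bar i1\bar1}+F^{i\bar j,r\bar s}w_{i\bar j1}w_{r\bar s\bar1}=f_{1\bar1}.
\]
The critical-point condition $\nabla Q=0$ and the inequality $F^{i\bar i}Q_{i\bar i}\le 0$ then give the usual chain of estimates. The commutator terms are bounded by $C\lambda_1\sum F^{i\bar i}$. The $\varphi$ term produces the good quantity $\varphi'\sum F^{i\bar i}(|u_{ii}|^2+\lambda_i^2)$, and the $\psi$ term produces $A\sum F^{i\bar i}$ up to $-Ak f$.

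The new difficulty is the term $f_{1\bar1}/\lambda_1$ together with the third-order terms. We write $f=h^{(2k-2)/3}$ with $h=f^{3/(2k-2)}\in C^{2,1}$. Then
\[
f_{1\bar1}\ge -C f^{1-\frac{3}{2k-2}}-C\frac{|\nabla h|^2}{h}f^{1-\frac{3}{2k-2}}.
\]
The key input is the classical inequality $|\nabla h|^2\le C h^{4/3}$, valid for nonnegative $C^{2,1}$ functions; it is the analogue of $|\nabla g|^2\le Cg$ for nonnegative $g\in C^{1,1}$. With it, $|f_{1\bar1}|\le C f^{1-\frac{1}{k-1}}$, and likewise $|\nabla f|\le Cf^{1-\frac{1}{2(k-1)}}$. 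Next we compare these with $\sum F^{i\bar i}=(n-k+1)\sigma_{k-1}$. By the generalized Newton--Maclaurin inequality, $\sigma_{k-1}\ge c\,\sigma_k^{(k-2)/(k-1)}=c f^{1-\frac1{k-1}}$. So $|f_{1\bar1}|/\lambda_1\le C\sum F^{i\bar i}$, and this term is absorbed by the $\psi$ term for $A$ large.

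What remains is the third-order term. We must show that
\[
-\frac{F^{i\bar j,r\bar s}w_{i\bar j1}w_{r\bar s\bar1}}{\lambda_1}+\frac{F^{i\bar i}|w_{1\bar1 i}|^2}{\lambda_1^2}-\frac{\sum_{p>1}F^{i\bar i}|w_{p\bar1 i}|^2}{\lambda_1(\lambda_1-\lambda_p)}
\]
has the right sign, up to terms controlled by $\sum F^{i\bar i}$. This needs care because the first-derivative information on $f$ now enters through the concavity term, via $F^{i\bar i}w_{i\bar i1}=f_1$. The plan is to use the refined concavity inequality for $\sigma_k$:
\[
-F^{i\bar j,r\bar s}\xi_{i\bar j}\bar\xi_{r\bar s}\ge -\frac{|F^{i\bar i}\xi_{i\bar i}|^2}{\sigma_k}\Big(1-\tfrac1k\Big)^{-1}\cdots,
\]
in the form that retains the extra quadratic gain, i.e.\ $-\tfrac{k-2}{k-1}\,\frac{|\nabla\sigma_k|^2}{\sigma_k}$ rather than $-\tfrac{k-1}{k}\,\frac{|\nabla\sigma_k|^2}{\sigma_k}$. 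Combined with $|\nabla f|^2/f\le Cf^{1-\frac1{k-1}}$, the $|f_1|^2/(f\lambda_1)$ term is then bounded by $C\sum F^{i\bar i}$. We split indices by the size of $\lambda_i$ relative to $\lambda_1$, as in Hou--Ma--Wu: the indices with $\lambda_i\ge\delta\lambda_1$, and the remaining ones, where $F^{i\bar i}\ge c F^{1\bar1}$. The terms $F^{i\bar i}|w_{1\bar1 i}|^2/\lambda_1^2$ are controlled through $\nabla Q=0$ by $\varphi'^2|\nabla|\nabla u|^2|^2$, and hence by the $\varphi'$ good term. We expect the restriction $k\ge5$ to enter exactly here. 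The gain $\frac1{k-1}$ from the sharper exponent must dominate the loss from the mixed terms $F^{1\bar1}|w_{1\bar11}|^2$ after Cauchy--Schwarz, and the resulting numerical inequality in $k$ should close only for $k\ge5$.

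\textbf{Main obstacle.} The delicate step is this third-order absorption. We must show that the quantity $\frac{|f_1|^2}{f\lambda_1}$ (and its $\sigma_{k-1}$-weighted variants arising from the index splitting) is dominated using only the weaker gradient bound $|\nabla f|\lesssim f^{1-1/(2k-2)}$, which is weaker than the $f^{1-1/(2(k-1))}$-type bound with the $f^{1/(k-1)}\in C^{1,1}$ structure. We would first try to prove a lemma of the form
\[
-\sum F^{i\bar j,r\bar s}w_{i\bar j1}w_{r\bar s\bar1}\ge \Big(1+\tfrac{c}{k}\Big)\frac{|f_1|^2}{f}-C\frac{f_1^2}{f}\cdot\frac{\sigma_{k-1}(\lambda|1)}{\sigma_{k-1}}
\]
and then use $\sigma_{k-1}(\lambda|1)\lambda_1\approx\sigma_k$ in the regime $\lambda_1$ large. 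Once \eqref{eqn1.6} holds at $x_0$, it follows everywhere, since $\Delta u\le C\lambda_1$ by admissibility.
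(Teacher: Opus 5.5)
There is a genuine gap, and it sits at the step you call the key input. The inequality $|\nabla h|^{2}\le Ch^{4/3}$ for nonnegative $h\in C^{2,1}$ is false. Both $h(x)=x^{2}$ and the family $h_{\beta}=(x+\beta^{1/2})^{2}$ from the paper's remark satisfy $|h'|^{2}=4h$, so even for $C^{2,1}$ one only has $|\nabla h|^{2}\le Ch$. With the correct bound, $|\nabla f|^{2}/f=p^{2}h^{p-2}|\nabla h|^{2}$ is only $O\big(f^{1-\frac{3}{2(k-1)}}\big)$, while $\sum F^{i\bar i}\gtrsim\sigma_{1}^{1/(k-1)}f^{1-\frac{1}{k-1}}$. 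The ratio of $|f_{1}|^{2}/(f\lambda_{1})$ to $\sum F^{i\bar i}$ is then of order $\lambda_{1}^{-k/(k-1)}f^{-1/(2(k-1))}$, which is unbounded as $f\to0$. So no fixed multiple of $|f_{1}|^{2}/(f\lambda_{1})$ can be absorbed by the $\psi$-term independently of $\inf f$. Your lower bound $f_{1\bar1}\ge-Cf^{1-\frac{1}{k-1}}$ does survive, via $h_{1\bar1}\ge-Kh^{1/3}$, but that does not rescue the gradient term.

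Your second ingredient is also unavailable. The coefficient $\frac{k-2}{k-1}$ in Lemma~\ref{thm2.2} appears only when $b=\sigma_{1}^{ij}A_{ij}/\sigma_{1}$ vanishes, as at the maximum of $\sigma_{1}$ in the Christoffel--Minkowski proof. At the maximum of $\log\lambda_{1}+\varphi+\psi$ it does not vanish, and minimizing the $b$-terms over $b$ gives back exactly $-\frac{k-1}{k}a^{2}$, i.e.\ plain concavity of $\sigma_{k}^{1/k}$. The lemma you propose, with a positive lower bound $(1+\frac{c}{k})|f_{1}|^{2}/f$, is false. Taking $w_{i\bar j1}=t\lambda_{i}\delta_{ij}$ gives $-\sigma_{k}^{i\bar j,r\bar s}w_{i\bar j1}w_{r\bar s\bar1}=-\frac{k-1}{k}|f_{1}|^{2}/f$, while $\sigma_{k-1}(\lambda|1)/\sigma_{k-1}\to0$ as $\lambda_{1}\to\infty$.

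The missing idea is that $f_{1\bar1}$ and $|f_{1}|^{2}/f$ must not be estimated separately; only a specific one-sided combination is controlled. The paper differentiates $F(W)=f^{1/k}$ with $F=\sigma_{k}^{1/k}$, so concavity of $F$ lets it discard the diagonal part of $-F^{i\bar j,s\bar t}u_{i\bar j1}u_{s\bar t\bar1}$ entirely. When $\lambda_{k}\le\varepsilon\lambda_{1}$, the off-diagonal part $-F^{i\bar1,1\bar i}|u_{1\bar1i}|^{2}$ together with \eqref{eqn3.14} (from Chou--Wang) absorbs $F^{i\bar i}|u_{1\bar1i}|^{2}/\lambda_{1}^{2}$ for $i\ge2$. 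When $\lambda_{k}>\varepsilon\lambda_{1}$, the term $\varphi'F^{k\bar k}\lambda_{k}^{2}$ does the job. The only delicate $f$-term left is $\lambda_{1}^{-1}(f^{1/k})_{1\bar1}$, and with $\tilde f=f^{3/(2k-2)}$,
\[
\big(f^{1/k}\big)_{1\bar1}=\frac{2k-2}{3k}\,\tilde f^{-\frac{k+2}{3k}}\Big(\tilde f_{1\bar1}-\frac{k+2}{3k}\,\frac{|\tilde f_{1}|^{2}}{\tilde f}\Big).
\]
Since $\frac{k+2}{3k}<\frac12$ exactly when $k\ge5$, the sharp one-sided inequality of Lemma~\ref{thm3.1}(ii) gives $(f^{1/k})_{1\bar1}\ge-Cf^{-1/(k(k-1))}$. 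This is absorbed by $\operatorname{tr}F^{i\bar j}\ge c\,\sigma_{1}^{1/(k-1)}f^{-1/(k(k-1))}$, and the first-order term $|\nabla f^{1/k}|$ only needs $|\nabla\tilde f|\le C\tilde f^{1/2}$. So $k\ge5$ enters through the admissible range $\alpha<\frac12$ in that inequality, which the paper's remark shows cannot be enlarged. It does not come from a numerical balance in the third-order absorption, as you expected.
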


The $C^{0}$ estimate for \eqref{eqn1.2} was established by Hou-Ma-Wu \cite{Hou2010} via Yau's Moser iteration.
As pointed out in \cite{Hou2010}, an estimate of the form \eqref{eqn1.6} is well suited to blow-up analysis.
In particular, using a Liouville theorem together with the blow-up method, Dinew-Ko{\l}odziej \cite{Dinew2017} derived a gradient estimate for \eqref{eqn1.2} based on an estimate of the form \eqref{eqn1.6}.
Consequently, combining Theorem \ref{thm1.1} with the aforementioned results, one obtains an \emph{almost} $C^{1,1}$ solution of \eqref{eqn1.2}, where a solution $u$ is called \emph{almost} $C^{1,1}$ if 
\[
\sup_{M}\Delta u\leq C.
\]
This is equivalent to the boundedness of the mixed complex derivatives $u_{i\bar{j}}$ for $i,j=1,\dots,n$.
Every almost $C^{1,1}$ solution belongs to $W^{2,p}$ for all $p<\infty$, and hence to $C^{1,\alpha}$ for all $\alpha<1$.
However, it need not belong to $W^{2,\infty}=C^{1,1}$.

\begin{corollary}
Assume that \eqref{eqn1.3} and \eqref{eqn1.4} hold, and let $f\geq0$ with $f^{3/(2k-2)}\in C^{2,1}(M)$.
Let $k\geq5$.
Then \eqref{eqn1.2} admits a unique $k$-admissible almost $C^{1,1}$ solution.
\end{corollary}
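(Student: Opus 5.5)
The plan is to deduce the corollary from Theorem \ref{thm1.1} by the standard approximation scheme described in the introduction: solve non-degenerate problems, get estimates independent of $\inf_M f$, and pass to the limit.

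\emph{Step 1: approximating the right-hand side.} Set $h:=f^{3/(2k-2)}\in C^{2,1}(M)$, with $h\geq0$. Mollify $h$ on $M$ using a fixed partition of unity and local convolutions, and add $\varepsilon$. This gives smooth functions $h_\varepsilon\geq\varepsilon$ with $\|h_\varepsilon\|_{C^{2,1}(M)}\leq C\|h\|_{C^{2,1}(M)}+1$ and $h_\varepsilon\to h$ uniformly. Define
\[
f_\varepsilon:=c_\varepsilon\, h_\varepsilon^{\frac{2k-2}{3}},\qquad c_\varepsilon:=\binom{n}{k}\int_M\omega^n\Big/\int_M h_\varepsilon^{\frac{2k-2}{3}}\omega^n .
\]
Then $f_\varepsilon$ is smooth, $\inf_M f_\varepsilon>0$, and \eqref{eqn1.3} holds for $f_\varepsilon$. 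Since $\int_M f\omega^n>0$, we have $c_\varepsilon\to1$. Moreover $f_\varepsilon^{3/(2k-2)}=c_\varepsilon^{3/(2k-2)}h_\varepsilon$ has $C^{2,1}$ norm bounded uniformly in $\varepsilon$.

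\emph{Step 2: solving the non-degenerate problems.} By the known existence theory for smooth positive right-hand sides (Dinew--Ko{\l}odziej, building on Hou--Ma--Wu), there is a smooth $k$-admissible solution $u_\varepsilon$ of \eqref{eqn1.2} with $f_\varepsilon$ on the right, normalized by \eqref{eqn1.4}.

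\emph{Step 3: uniform estimates.}
\begin{itemize}
\item The $C^0$ estimate of Hou--Ma--Wu depends only on bounds of $f_\varepsilon$ (for instance in $L^\infty$) and on $(M,\omega)$. Hence $\|u_\varepsilon\|_{C^0}\leq C$ uniformly in $\varepsilon$.
\item Theorem \ref{thm1.1} then gives $\sup_M\Delta u_\varepsilon\leq C(\sup_M|\nabla u_\varepsilon|^2+1)$, with $C$ independent of $\varepsilon$; this is where $k\geq5$ is used.
\item Feeding this inequality into the blow-up argument of Dinew--Ko{\l}odziej (a Liouville theorem for entire $k$-subharmonic functions with bounded Laplacian) gives $\sup_M|\nabla u_\varepsilon|\leq C$.
\item Together these give $\sup_M\Delta u_\varepsilon\leq C$. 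Since $k$-admissibility implies $\Delta u_\varepsilon+n>0$, we get $\|\Delta u_\varepsilon\|_{L^\infty}\leq C$.
\end{itemize}

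\emph{Step 4: passing to the limit.} By $L^p$ elliptic theory for the Laplacian, $u_\varepsilon$ is bounded in $W^{2,p}$ for every $p<\infty$, hence in $C^{1,\alpha}$ for all $\alpha<1$. A subsequence converges in $C^{1,\alpha}$ and weakly in $W^{2,p}$ to some $u$. This $u$ satisfies $\sup_M\Delta u\leq C$ and \eqref{eqn1.4}. It is $k$-$\omega$-subharmonic, being a uniform limit of $k$-admissible functions. By continuity of complex Hessian measures under uniform convergence of bounded $k$-subharmonic functions (B\l ocki, Dinew--Ko{\l}odziej), $u$ solves \eqref{eqn1.2} with right-hand side $f$ in the pluripotential sense, and $\sigma_k(W)=f$ almost everywhere.

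\emph{Step 5: uniqueness.} Apply the uniqueness theorem for continuous $k$-$\omega$-subharmonic weak solutions on compact K\"ahler manifolds (Dinew--Ko{\l}odziej, via a comparison principle). It determines $u$ up to an additive constant, and \eqref{eqn1.4} fixes that constant.

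The main obstacle is Step 1. The approximation must preserve the compatibility condition and keep $\|f_\varepsilon^{3/(2k-2)}\|_{C^{2,1}}$ uniformly bounded. The multiplicative normalization $c_\varepsilon$ applied to $h_\varepsilon^{(2k-2)/3}$ handles both at once, so that Theorem \ref{thm1.1} applies with uniform constants.
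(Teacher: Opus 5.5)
Your proposal is correct and follows the route the paper indicates. You solve non-degenerate approximating problems, combine the Hou--Ma--Wu $C^0$ bound, Theorem \ref{thm1.1}, and the Dinew--Ko{\l}odziej blow-up gradient estimate to get $\sup_M\Delta u\leq C$ independently of $\inf_M f$, and pass to the limit. You also supply details the paper leaves implicit: the normalized approximation $f_\varepsilon=c_\varepsilon h_\varepsilon^{(2k-2)/3}$, which preserves \eqref{eqn1.3} while keeping $\|f_\varepsilon^{3/(2k-2)}\|_{C^{2,1}(M)}$ uniformly bounded; the identification of the limit as a weak solution; and uniqueness via the pluripotential comparison principle.
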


\par
\vspace{2mm}
The problem of finding a convex hypersurface with the $k$-th elementary symmetric function of the principal radii prescribed on its outer normals is often called the Christoffel-Minkowski problem.
It corresponds to finding a regular convex solution of 
\begin{equation}\label{eqn1.7}
\sigma_{k}((u_{ij}+u\delta_{ij}))=f\quad \text{on}\ \mathbb{S}^{n},
\end{equation}
with the positive definite condition
\begin{equation}\label{eqn1.8}
(u_{ij}+u\delta_{ij})>0\quad \text{on}\ \mathbb{S}^{n},
\end{equation}
where $u_{ij}$ are the second-order covariant derivatives with respect to any orthonormal frame $\{e_{1},e_{2},\dots,e_{n}\}$ on $\mathbb{S}^{n}$, and $\delta_{ij}$ is the standard Kronecker symbol.
We refer to \eqref{eqn1.7} as the Christoffel-Minkowski equation.
A necessary condition for solvability of \eqref{eqn1.7}--\eqref{eqn1.8} is
\begin{equation}\label{eqn1.9}
\int_{\mathbb{S}^{n}}x_{i}f(x)\ dx=0\quad \text{for}\ i=1,2,\dots,n+1.
\end{equation}

In the case $k=1$, \eqref{eqn1.7}--\eqref{eqn1.8} reduces to the Christoffel problem, which was solved by Firey \cite{Firey1967,Firey1968} and Berg \cite{Berg1969}.
In the case $k=n$, \eqref{eqn1.7}--\eqref{eqn1.8} corresponds to the Minkowski problem, and this case was settled through the works of Minkowski \cite{Minkowski1897}, Alexandrov \cite{Alexandrov1937}, Lewy \cite{Lewy1938}, Nirenberg \cite{Nirenberg1953}, Pogorelov \cite{Pogorelov1978}, and Cheng-Yau \cite{Yau1976}.
For the intermediate Christoffel-Minkowski problems $(2\leq k\leq n-1)$, solvability was established by Guan-Ma \cite{Guan2003}.
The equation \eqref{eqn1.7} is called \emph{degenerate} if the nonnegative function $f$ is allowed to vanish at some points on $\mathbb{S}^{n}$, and \emph{non-degenerate} if $\inf_{\mathbb{S}^{n}}f>\varepsilon$ for some $\varepsilon>0$. 
A function $u\in C^{1,1}(\mathbb{S}^{n})$ is called \emph{$k$-admissible} if 
\[
\lambda((u_{ij}+u\delta_{ij}))\in\Gamma_{k}\quad \text{a.e. on}\ \mathbb{S}^{n}.
\]
Guan-Zhang \cite{Guan2021} established the existence of $k$-admissible solutions in the degenerate case under the sharp condition $f^{1/(k-1)}\in C^{1,1}(\mathbb{S}^{n})$.

We establish the following existence theorem for \eqref{eqn1.7} in the degenerate case.

\begin{theorem}\label{thm1.3}
Let $f\geq0$ and $f^{3/(2k-2)}\in C^{2,1}(\mathbb{S}^{n})$.
Then \eqref{eqn1.7} admits a unique $k$-admissible $C^{1,1}(\mathbb{S}^{n})$ solution up to translations.
\end{theorem}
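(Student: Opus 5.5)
Theorem \ref{thm1.3} will follow from a priori $C^{1,1}$ estimates for the non-degenerate problem that are independent of $\inf f$, combined with an approximation argument.

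\textbf{Approximation.} Approximate $f$ by $f_{\varepsilon}>0$, keeping the structure needed for \eqref{eqn1.9}. Set $f_{\varepsilon}^{3/(2k-2)}:=f^{3/(2k-2)}+\varepsilon$ and correct the first spherical harmonics. Concretely, replace $f_{\varepsilon}$ by $f_{\varepsilon}+\sum_i a_i^{\varepsilon}x_i$ with $a_i^{\varepsilon}=O(\varepsilon)$, so that \eqref{eqn1.9} holds and $\inf f_\varepsilon>0$ for small $\varepsilon$. We must also check that $\|f_{\varepsilon}^{3/(2k-2)}\|_{C^{2,1}}$ stays uniformly bounded. This holds because adding a small smooth perturbation to a function bounded below by $c\varepsilon$ changes its $3/(2k-2)$-power in a controlled way.

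\textbf{Solving and uniform bounds.} Guan--Ma \cite{Guan2003} then give a smooth convex solution $u_\varepsilon$ of \eqref{eqn1.7}--\eqref{eqn1.8}, unique up to adding linear functions $\langle a,x\rangle$. Normalize $u_\varepsilon$ to be $L^2$-orthogonal to the $x_i$. The $C^0$ and $C^1$ bounds are standard: $u+\Delta u/n$-type bounds via $\sigma_1$ and the support-function interpretation give control by $\int f$. The core step is a uniform bound
\[
\sup_{\mathbb S^n}\Delta u_\varepsilon\le C(\|f^{3/(2k-2)}\|_{C^{2,1}},n,k).
\]
Since the matrix $(u_{ij}+u\delta_{ij})$ lies in $\Gamma_k\subset\Gamma_1$, a bound on its trace bounds every eigenvalue from above and (via $\Gamma_1$) from below. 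This yields a full $C^{1,1}$ bound, unlike the complex case.

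\textbf{The trace estimate (main obstacle).} Let $H=\sigma_1(b)$ with $b_{ij}=u_{ij}+u\delta_{ij}$. Take the maximum point of $H$, or rather of the largest eigenvalue $\lambda_1$, possibly perturbed. Differentiate the equation twice, using the commutation formulas on $\mathbb S^n$: $b_{ij,kl}$ versus $b_{kl,ij}$ differ by $b_{ij}-b_{kl}\delta$ terms. These favorable terms are what give the positive term $\sigma_k^{ii}\lambda_1\ge$ and avoid any need for a gradient term.

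The bad term is $\nabla^2 f$ (written via $F=f^{1/k}$ or better $\tilde f=f^{1/(k-1)}$) against the third-derivative terms $-\sigma_k^{pq,rs}b_{pq,1}b_{rs,1}$. Under only $f^{3/(2k-2)}\in C^{2,1}$ we have $f^{1/(k-1)}=h^{2/3}$ with $h\in C^{2,1}$, so $\nabla^2 f^{1/(k-1)}$ may blow up like $h^{-1/3}|\nabla h|^2/h$ near zeros. To absorb it I would use a refined concavity inequality: for $\lambda\in\Gamma_k$,
\[
-\frac{\sigma_k^{pq,rs}\xi_{pq}\xi_{rs}}{\sigma_k}+\Big(1-\tfrac1{k-1}+\delta\Big)\frac{(\sigma_k^{pq}\xi_{pq})^2}{\sigma_k^2}\ \ge\ c\sum_{i\ne1}\frac{\sigma_k^{ii}\xi_{1i}^2}{\lambda_1\sigma_k}-\dots,
\]
that is, the "$1/(k-1)$"-type concavity sharpened by the $\lambda_1$-direction terms. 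I would also use the Hessian bound for $h$: nonnegative $C^{2,1}$ functions satisfy $|\nabla^2 h|\le Ch^{1/3}$ and $|\nabla h|\le Ch^{2/3}$ (Glaeser-type interpolation). This controls $\nabla^2 f^{1/(k-1)}$ by $C(1+f^{-1/(k-1)}\cdot\text{stuff})$, and the stuff is then absorbed by the extra good third-order terms coming from differentiating $\sigma_k$ twice in the $e_1$ direction.

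Balancing these exponents is exactly where $3/(2k-2)$ is sharp, and I expect this absorption step to be the hardest. The algebra must be split into cases: $\lambda_k$ comparable to $\lambda_1$, versus $\lambda_k\ll\lambda_1$, where $\sigma_k\approx\lambda_1\sigma_{k-1}(\lambda|1)$ is used.

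\textbf{Passing to the limit and uniqueness.} Once the estimate holds, Arzelà--Ascoli gives $u_\varepsilon\to u$ in $C^{1,\alpha}$ with $u\in C^{1,1}$. Weak-$*$ convergence of $D^2u_\varepsilon$ and closedness of $\overline\Gamma_k$ show that $u$ is $k$-admissible a.e. and solves \eqref{eqn1.7}. For uniqueness up to translations, compare two $C^{1,1}$ admissible solutions using the mixed-discriminant (Alexandrov–Fenchel type) argument of Guan--Zhang \cite{Guan2021}, which applies verbatim in the $C^{1,1}$ class.
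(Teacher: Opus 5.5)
The overall reduction (an $\inf f$-independent bound on $\sigma_1(W)$ for non-degenerate data, followed by approximation) is the same as the paper's. However, your core step is not established, and the tool you propose for it is false. For nonnegative $h\in C^{2,1}$, neither $|\nabla^2h|\le Ch^{1/3}$ nor $|\nabla h|\le Ch^{2/3}$ holds: a function vanishing exactly quadratically, e.g.\ $h=x_1^2$ locally, violates both. What is true is $|\nabla h|^2\le Ch$, together with the \emph{one-sided} estimate of Lemma~\ref{thm2.3}(ii) and Lemma~\ref{thm4.1}: $\nabla^2_{ee}h-\alpha|\nabla_eh|^2/h\ge-Kh^{1/3}$ for every $\alpha<1/2$. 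This is the missing idea. With $h=f^{3/(2k-2)}$ and $\alpha=1/3$ it says exactly
\[
\nabla^2_{ee} f^{\frac{1}{k-1}}=\tfrac23h^{-\frac13}\Bigl(\nabla^2_{ee}h-\tfrac13\frac{|\nabla_e h|^2}{h}\Bigr)\ge-\tfrac23K .
\]
So the Hessian of $f^{1/(k-1)}$ can blow up only upward, which is harmless in a maximum-principle argument for an upper bound. Nothing has to be absorbed by third-order terms, and neither your unproved ``refined concavity inequality'' nor a case split is needed.

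The paper proceeds as follows.
\begin{itemize}
\item It maximizes $H=\sigma_1(W)$, not $\lambda_1$. Commuting derivatives on $\mathbb{S}^n$ gives $0\ge\sigma_k^{ij}\Delta W_{ij}-nkf+H\operatorname{tr}\sigma_k^{ij}$.
\item In the Laplacian of the equation it applies Lemma~\ref{thm2.2} with $A=\nabla_lW$. Since $\sigma_1^{ij}\nabla_lW_{ij}=\nabla_lH=0$ at the maximum, only $-\sigma_k^{ij,st}\nabla_lW_{ij}\nabla_lW_{st}\ge-\frac{k-2}{k-1}|\nabla_lf|^2/f$ survives.
\item Then $\Delta f-\frac{k-2}{k-1}\frac{|\nabla f|^2}{f}=(k-1)f^{1-\frac1{k-1}}\Delta f^{\frac1{k-1}}\ge-Kf^{1-\frac1{k-1}}$.
\item Combined with $\operatorname{tr}\sigma_k^{ij}\ge CH^{\frac1{k-1}}f^{1-\frac1{k-1}}$, this gives $CH^{\frac{k}{k-1}}\le K+nkf^{\frac1{k-1}}$ for all $k\ge2$.
\end{itemize}
The choice of $\sigma_1$ matters. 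At a maximum of $\lambda_1$ you lose $\nabla\sigma_1=0$, and Lemma~\ref{thm2.2} (or concavity of $\sigma_k^{1/k}$) then only gives the coefficient $\frac{k-1}{k}$, i.e.\ $\alpha=\frac{k+2}{3k}$. That is $<\frac12$ only for $k\ge5$, which is exactly the restriction in the complex case.

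Several secondary claims also need repair.
\begin{itemize}
\item Your approximation gives $(\tilde f+\varepsilon)^p\ge\varepsilon^p$, not $c\varepsilon$. For $k\ge3$ one has $p=\frac{2k-2}{3}>1$, so a linear correction of size $\sim\varepsilon$ can make $f_\varepsilon$ negative near zeros of $f$.
\item $\|f_\varepsilon^{3/(2k-2)}\|_{C^{2,1}}$ need not stay bounded; for instance $(x^{2p}+c)^{1/p}$ has third derivatives of order $c^{-1/(2p)}$. It is simpler to take $f+\varepsilon$, which satisfies \eqref{eqn1.9} because $\int_{\mathbb{S}^n}x_i=0$. The estimate uses only $\nabla^2f^{1/(k-1)}\ge-C$, and this survives because $s\mapsto(s^{k-1}+\varepsilon)^{1/(k-1)}$ is convex with slope in $[0,1]$.
\item Guan--Ma's convex solutions require an extra spherical convexity condition on $f^{-1/k}$. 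What you actually need is existence of $k$-admissible solutions for positive $f$.
\item The support-function argument for the $C^0$ bound is unavailable for non-convex admissible solutions. The paper instead gets $C^0$ by a blow-up argument from the trace bound and \eqref{eqn4.10}.
\item Bounding all eigenvalues by the trace uses $\Gamma_2$ (so that $|\lambda|^2<\sigma_1^2$), not $\Gamma_1$.
\end{itemize}
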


\par
\vspace{2mm}
\textbf{Notation.}
Throughout the paper, we suppress the dependence of constants on the dimension $n$ and the parameter $k$, and a constant depending only on $n$ and $k$ is called \emph{universal}. 
\par
\vspace{2mm}

The rest of the paper is organized as follows. 
Section 2 collects preliminary results.
Section 3 establishes the second-order estimate for the complex $k$-Hessian equation, proving Theorem \ref{thm1.1}.
Section 4 is devoted to the existence of $k$-admissible solutions to the Christoffel-Minkowski equation, proving Theorem \ref{thm1.3}.
The Appendix (Section 5) contains auxiliary lemmas.

\section{Preliminaries}

We adopt the following conventions: $\sigma_{0}(\lambda)=1$; $\sigma_{k}(\lambda)=0$ for $k<0$ and $k>n$; 
\[
\sigma_{k;i_{1}\cdots i_{j}}(\lambda)=\sigma_{k}(\lambda|i_{1}\cdots i_{j})=\sigma_{k}(\lambda)\big|_{\lambda_{i_{1}}=\cdots=\lambda_{i_{j}}=0};
\]
moreover, $\sigma_{k;i_{1}\cdots i_{j}}=0$ if $i_{r}=i_{s}$ for some $1\leq r<s\leq j$. 
Some fundamental properties of $\sigma_k$ are listed below.

\begin{lemma}[\cite{Hardy1952,Wang2009}]\label{thm2.1}
For any $k\in\{2,3,\dots,n\}$ and $\lambda\in\Gamma_{k}$, we have
\[
\sigma_{k}^{\frac{1}{k}}(\lambda)\leq C_{1}\sigma_{k-1}^{\frac{1}{k-1}}(\lambda);
\]
\[
\sigma_{k-1}(\lambda)\geq C_{2}\sigma_{1}^{\frac{1}{k-1}}(\lambda)\sigma_{k}^{1-\frac{1}{k-1}}(\lambda);
\]
\[
\sum_{i=1}^{n}\sigma_{k-1;i}(\lambda)=(n-k+1)\sigma_{k-1}(\lambda),
\]
where $C_{1},C_{2}>0$ are universal.
\end{lemma}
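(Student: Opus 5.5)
The three assertions of Lemma \ref{thm2.1} are classical, and I plan to treat them separately, working throughout with $\lambda\in\Gamma_{k}$.

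The identity $\sum_{i}\sigma_{k-1;i}(\lambda)=(n-k+1)\sigma_{k-1}(\lambda)$ is a counting argument. Each monomial $\lambda_{i_{1}}\cdots\lambda_{i_{k-1}}$ of $\sigma_{k-1}$ appears in $\sigma_{k-1;i}$ exactly when $i\notin\{i_{1},\dots,i_{k-1}\}$. That happens for precisely $n-k+1$ indices $i$, and summing over $i$ gives the identity. This step needs no cone condition.

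For the first inequality I will use the Newton--Maclaurin inequalities on $\Gamma_{k}$. Set $S_{j}=\sigma_{j}/\binom{n}{j}$. The claim is that $S_{k}^{1/k}\leq S_{k-1}^{1/(k-1)}$. Newton's inequality $S_{j-1}S_{j+1}\leq S_{j}^{2}$ holds for all real $\lambda$. On $\Gamma_{k}$ we have $S_{1},\dots,S_{k}>0$, so chaining these inequalities gives the monotonicity of $S_{j}^{1/j}$ for $j\leq k$. To see this, induct using $S_{j}^{2}\geq S_{j-1}S_{j+1}$ together with $S_{j-1}\geq S_{j}^{(j-1)/j}$, which yields $S_{j+1}\leq S_{j}^{(j+1)/j}$. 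Restoring the binomial coefficients gives the first inequality with universal $C_{1}$.

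For the second inequality I will use the concavity of the quotient $q(\lambda)=\sigma_{k}/\sigma_{k-1}$ on $\Gamma_{k-1}$, which is a standard fact, or equivalently the generalized Newton--Maclaurin inequality
\[
\left(\frac{S_{k}}{S_{l}}\right)^{\frac{1}{k-l}}\leq\left(\frac{S_{k-1}}{S_{l}}\right)^{\frac{1}{k-1-l}}\quad\text{for } l<k-1,
\]
applied with $l=0$ and $l=1$. The most direct route is the chain of Newton inequalities
\[
\frac{S_{k}}{S_{k-1}}\leq\frac{S_{k-1}}{S_{k-2}}\leq\cdots\leq\frac{S_{2}}{S_{1}}.
\]
It implies $(S_{k}/S_{k-1})^{k-2}\leq \prod_{j=2}^{k-1}S_{j}/S_{j-1}=S_{k-1}/S_{1}$. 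Rearranging gives $S_{k}^{k-2}S_{1}\leq S_{k-1}^{k-1}$, that is, $S_{k-1}\geq S_{1}^{1/(k-1)}S_{k}^{(k-2)/(k-1)}$, which is exactly the second inequality with exponent $1-\frac{1}{k-1}$. For $k=2$ it reduces to $\sigma_{1}\geq c\,\sigma_{1}$, which is trivial.

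The only real care point is ensuring positivity of $S_{j}$ for $j\leq k$ so that the Newton chain can be divided through. This is guaranteed by $\lambda\in\Gamma_{k}$, where all of $\sigma_{1},\dots,\sigma_{k}$ are positive. For the references I will cite Hardy--Littlewood--P\'olya for Newton's inequality and Wang's survey for the cone version.
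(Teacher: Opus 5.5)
Your proposal is correct. The counting argument gives the trace identity. Newton's inequalities (valid for all real $\lambda$), together with $S_1,\dots,S_k>0$ on $\Gamma_k$, give Maclaurin's inequality $S_k^{1/k}\le S_{k-1}^{1/(k-1)}$, and the telescoping chain gives $S_1S_k^{k-2}\le S_{k-1}^{k-1}$, with universal constants coming from the binomial coefficients. The paper gives no proof of its own and simply cites Hardy--Littlewood--P\'olya and Wang's notes for these classical Newton--Maclaurin facts, so you are following the same route, just written out in full.
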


For an $n\times n$ Hermitian matrix $A$, define 
\[
F(A):=\sigma_{k}^{\frac{1}{k}}(\lambda(A)),\quad F^{i\bar{j}}(A):=\frac{\partial F}{\partial A_{i\bar{j}}},\quad F^{i\bar{j},s\bar{t}}(A):=\frac{\partial^{2}F}{\partial A_{i\bar{j}}A_{s\bar{t}}},\quad \operatorname{tr}F^{i\bar{j}}:=\sum_{i=1}^{n}F^{i\bar{i}}.
\]
It follows from Lemma \ref{thm2.1} that  
\begin{equation}\label{eqn2.1}
\operatorname{tr}F^{i\bar{j}}=\frac{1}{k}\sigma_{k}^{\frac{1}{k}-1}\sum_{i=1}^{n}\sigma_{k-1;i}=\frac{n-k+1}{k}\sigma_{k}^{\frac{1}{k}-1}\sigma_{k-1}\geq C\sigma_{1}^{\frac{1}{k-1}}\sigma_{k}^{-\frac{1}{k(k-1)}},
\end{equation}
where $C>0$ is universal.
Furthermore, when $A$ is diagonal we have 
\begin{equation}\label{eqn2.2}
F^{i\bar{j},s\bar{t}}(A)=
\begin{cases}
\frac{\partial^{2}}{\partial\lambda_{i}\partial\lambda_{s}}\big(\sigma_{k}^{1/k}\big),\quad\text{if}\ i=j,s=t;\\
-\frac{1}{k}\sigma_{k}^{1/k-1}\sigma_{k-2;is},\quad\text{if}\ i\neq j,s=j,\ \text{and}\ t=i;\\
0,\quad\text{otherwise}.
\end{cases}
\end{equation}

We will use the following lemma from Guan-Li-Li \cite[Lemma 3.2]{Guan2012}, which follows from the concavity of 
\[
\bigg(\frac{\sigma_{k}}{\sigma_{1}}\bigg)^{\frac{1}{k-1}}\quad \text{in}\ \Gamma_{k}.
\]

\begin{lemma}\label{thm2.2}
For any $2\leq k\leq n$, we have
\[
\frac{\sigma_{k}^{ij,st}A_{ij}A_{st}}{\sigma_{k}}\leq \frac{k-2}{k-1}\Bigg(\frac{\sigma_{k}^{ij}A_{ij}}{\sigma_{k}}\Bigg)^{2}+\frac{2}{k-1}\frac{\sigma_{k}^{ij}A_{ij}}{\sigma_{k}}\frac{\sigma_{1}^{ij}A_{ij}}{\sigma_{1}}-\frac{k}{k-1}\Bigg(\frac{\sigma_{1}^{ij}A_{ij}}{\sigma_{1}}\Bigg)^{2},
\]
where all operators are evaluated at a symmetric matrix $W$ with $\lambda(W)\in\Gamma_{k}$, $A_{ij}$ are the entries of a matrix $A$, and we use the summation convention.
\end{lemma}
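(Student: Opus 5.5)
This inequality follows from the concavity of $G:=(\sigma_k/\sigma_1)^{1/(k-1)}$ in $\Gamma_k$. The plan is to write $G=\exp\big(\tfrac{1}{k-1}(\log\sigma_k-\log\sigma_1)\big)$ and compute its second derivative along a symmetric direction $A$ at $W$. Concavity of $G$ in $\lambda$, together with symmetry and the standard results on spectral functions (concave symmetric functions of eigenvalues give concave functions of the matrix), makes $G$ concave as a function of symmetric matrices $W$ with $\lambda(W)\in\Gamma_k$. Hence $\frac{d^2}{dt^2}G(W+tA)\big|_{t=0}\le 0$.

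Next, set $a=\sigma_k^{ij}A_{ij}/\sigma_k$ and $b=\sigma_1^{ij}A_{ij}/\sigma_1$. Since $\sigma_1$ is linear in $W$, $\sigma_1^{ij,st}=0$. Put $h=\tfrac{1}{k-1}(\log\sigma_k-\log\sigma_1)$. Along the line $W+tA$,
\[
h''=\frac{1}{k-1}\left(\frac{\sigma_k^{ij,st}A_{ij}A_{st}}{\sigma_k}-a^{2}+b^{2}\right),\qquad h'=\frac{a-b}{k-1}.
\]
Then $G''=e^{h}(h''+(h')^2)\le0$ gives
\[
\frac{\sigma_k^{ij,st}A_{ij}A_{st}}{\sigma_k}-a^2+b^2+\frac{(a-b)^2}{k-1}\le0 .
\]
Expanding, $-a^2+\frac{a^2}{k-1}=-\frac{k-2}{k-1}a^2$, the cross term is $-\frac{2}{k-1}ab$, and $b^2+\frac{b^2}{k-1}=\frac{k}{k-1}b^2$. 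Rearranging yields exactly the stated inequality.

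The only nontrivial ingredient is the concavity of $(\sigma_k/\sigma_1)^{1/(k-1)}$ on $\Gamma_k$. I would take this as known, following Guan--Li--Li and the classical concavity of quotients $(\sigma_k/\sigma_l)^{1/(k-l)}$. The lift from eigenvalues to matrices is standard: write $W$ in diagonal form and use the second-derivative formula analogous to \eqref{eqn2.2}. The off-diagonal terms have the correct sign because
\[
\frac{\partial G/\partial\lambda_i-\partial G/\partial\lambda_j}{\lambda_i-\lambda_j}\le 0,
\]
which holds for concave symmetric functions. This matrix-lifting step is the only place needing care, and it is classical.
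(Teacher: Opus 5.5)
Your proposal is correct and follows the route the paper indicates. The paper gives no separate proof; it cites Guan--Li--Li and attributes the inequality to the concavity of $(\sigma_k/\sigma_1)^{1/(k-1)}$ in $\Gamma_k$, and your computation $h''+(h')^2\le 0$ along $W+tA$ reproduces the stated coefficients exactly. Your restriction to symmetric $A$ is the right reading of the statement: for antisymmetric $A$ with $n=k=2$ and $W=I$, the left side is $2$ and the right side is $0$. Symmetric $A$ is also all that is used in \eqref{eqn4.6}, where $A=\nabla_l W$.
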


The following lemma is a basic estimate for real functions on $\mathbb{R}^{n}$; see \cite[Lemma 2.1]{Lyu2025}.

\begin{lemma}[\cite{Lyu2025}]\label{thm2.3}
Let $\Omega\Subset\Omega_{0}$ and let $h>0$ in $\Omega_{0}$.
Then the following hold.  

$(\romannumeral1)$ If $h\in C^{1,1}(\overline{\Omega_{0}})$, then 
\[
\frac{|\nabla h(x)|^{2}}{h(x)}\leq K,\quad \forall x\in\overline{\Omega},
\]
where $K>0$ depends on $\|h\|_{C^{1,1}(\overline{\Omega_{0}})}$ and $\operatorname{dist}(\Omega,\partial\Omega_{0})$, but is independent of $\inf_{\Omega_{0}}h$.

$(\romannumeral2)$ If $h\in C^{2,1}(\overline{\Omega_{0}})$, then for any $\alpha<1/2$, 
\begin{equation}\label{eqn2.3}
\partial_{ee}h(x)-\alpha\frac{|\partial_{e}h(x)|^{2}}{h(x)}\geq-Kh^{\frac{1}{3}}(x),\quad \forall x\in\overline{\Omega},\ e\in\mathbb{S}^{n-1},
\end{equation}
where $K>0$ depends on $\alpha$, $\|h\|_{C^{2,1}(\overline{\Omega_{0}})}$, and  $\operatorname{dist}(\Omega,\partial\Omega_{0})$, but is independent of $\inf_{\Omega_{0}}h$.
\end{lemma}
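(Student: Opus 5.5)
The statement to prove is Lemma \ref{thm2.3}, a one-variable calculus fact applied along lines. The plan is to restrict $h$ to segments $t\mapsto x+te$ with $|t|\le \delta:=\operatorname{dist}(\Omega,\partial\Omega_{0})/2$, which stay inside $\Omega_0$. Since the constants must not depend on $\inf h$, the only tools are Taylor expansion with the $C^{1,1}$ or $C^{2,1}$ remainder and the positivity $h>0$ on the segment.

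For (i), write $g(t)=h(x+te)$, so $g>0$ and $|g''|\le L:=\|h\|_{C^{1,1}}$. Taylor's formula gives
\[
0<g(t)\le g(0)+g'(0)t+\tfrac{L}{2}t^{2}\quad\text{for } |t|\le\delta.
\]
If $|g'(0)|/L\le\delta$, choose $t=-g'(0)/L$ to get $g'(0)^2\le 2Lg(0)$. Otherwise choose $t=-\delta\,\operatorname{sgn} g'(0)$ to get $|g'(0)|\delta\le g(0)+L\delta^2/2$. In that case $|g'(0)|\le C$ and $g(0)\ge |g'(0)|\delta/2$, so $g'(0)^2/g(0)\le 2|g'(0)|/\delta\le C$. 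Taking $e$ in the gradient direction yields $|\nabla h|^2/h\le K$.

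For (ii), fix $e$ and set $g(t)=h(x+te)$, $a=g(0)$, $b=g'(0)$, $c=g''(0)$. Now $|g'''|\le L$ almost everywhere, hence
\[
0<g(t)\le a+bt+\tfrac{c}{2}t^{2}+\tfrac{L}{6}|t|^{3}.
\]
The goal is $c\ge \alpha b^2/a - Ka^{1/3}$. First reduce to small $a$: if $a\ge a_0$, then $c$ is bounded by the $C^2$ norm and $b^2/a\le C/a_0$, so (ii) holds trivially. We may also assume $c<0$, since otherwise the inequality follows from the first case via $b^2/a\le C$ in (i)... actually $c\ge0$ alone does not give $c\ge\alpha b^2/a$, so the sign of $c$ is not the right split. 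The better route is to minimize the cubic bound.

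Choose $t=-s\,\operatorname{sgn}(b)$ with $s>0$, giving
\[
0< a-|b|s+\tfrac{c}{2}s^{2}+\tfrac{L}{6}s^{3}.
\]
If $c\ge \alpha b^2/a$ there is nothing to prove. Otherwise $c<\alpha b^2/a$, and with $\alpha<1/2$ the quadratic part $a-|b|s+\tfrac{\alpha b^2}{2a}s^2$ evaluated at $s=a/|b|$ equals $a(\alpha/2)\cdot$... more precisely, $a-a+\tfrac{\alpha}{2}a=\tfrac{\alpha}{2}a$. This is too coarse, so I would instead optimize jointly.

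Write $c=\alpha b^2/a-D$ with $D>0$; we must show $D\le Ka^{1/3}$. Plugging in gives
\[
\frac{D}{2}s^{2}< a-|b|s+\frac{\alpha b^{2}}{2a}s^{2}+\frac{L}{6}s^{3}.
\]
Take $s=\lambda a/|b|$ with $\lambda$ chosen to minimize $1-\lambda+\tfrac{\alpha}{2}\lambda^2$, i.e. $\lambda=1/\alpha>2$, giving value $1-\tfrac{1}{2\alpha}<0$ precisely because $\alpha<1/2$. This is the crucial use of $\alpha<1/2$. The inequality becomes
\[
\frac{D}{2}s^{2}<-\Big(\frac{1}{2\alpha}-1\Big)a+\frac{L}{6}s^{3},
\]
so $s^{3}\ge c_\alpha a/L$, i.e. $s\ge c\,a^{1/3}$. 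When $b$ is small, $s=a/(\alpha|b|)$ could exceed $\delta$. In that case, or in general, use the alternative choice $s=\mu a^{1/3}$ with $\mu$ large (fine for $a\le a_0$). This gives
\[
\frac{D}{2}\mu^{2}a^{2/3}< a-|b|s+\frac{\alpha b^2}{2a}s^2+\frac{L}{6}\mu^3a.
\]
The term $-|b|s+\tfrac{\alpha b^2}{2a}s^2$ is the hard part to control when $|b|s\gg a$; its sign is governed by comparing $s$ with $2a/(\alpha|b|)$. So I split into cases. If $\alpha|b|\le 2a^{2/3}/\mu$, the quadratic term is at most $|b|s\cdot(\alpha|b|s/2a)\le |b|s$, so the bracket is $\le 0$ and $D\le C a^{1/3}$. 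If $\alpha|b|>2a^{2/3}/\mu$, use $s=a/(\alpha|b|)\le \mu a^{1/3}/2\le\delta$. This gives a negative constant times $a$ plus $\tfrac{L}{6}s^3\le \tfrac{L\mu^3}{48}a$. Comparing with $\tfrac{D}{2}s^2\ge0$ requires $\mu$ small, which conflicts with the first case needing $\mu$ large.

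This tension between the two regimes is the main obstacle. I would resolve it by keeping $D$ on the left in the second case: the condition $\tfrac{D}{2}s^2<\tfrac{L}{6}s^3$ gives $D<Ls/3\lesssim a^{1/3}$ directly, with no negative-term bookkeeping needed. So the second case yields $D\le C\mu a^{1/3}$ for any fixed $\mu$ (say $\mu=1$). In the first case, $D\le C(1+\mu^{-2})a^{1/3}+\ldots$, also fine with $\mu=1$. I expect checking these two cases carefully, and that $s\le\delta$ once $a\le a_0$, to be the only real work.
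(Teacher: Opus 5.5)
The paper contains no proof of this lemma; it is quoted from \cite[Lemma 2.1]{Lyu2025}, so there is no in-paper argument to compare yours against. Judged on its own, your proof is correct once the abandoned attempts are removed.

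Part (i) is the standard Glaeser-type argument and is fine as written. Replace $L$ by $\max\{L,1\}$ if you want to exclude $L=0$.

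For (ii), the argument that does the work is your final one. Take $a\le a_{0}\le\delta^{3}$, write $c=\alpha b^{2}/a-D$ with $D>0$, and put $t=-s\operatorname{sgn}b$. Then
\[
\frac{D}{2}s^{2}<a-|b|s+\frac{\alpha b^{2}}{2a}s^{2}+\frac{L}{6}s^{3}.
\]
If $\alpha|b|\le 2a^{2/3}$, choose $s=a^{1/3}$. This makes $-|b|s+\frac{\alpha b^{2}}{2a}s^{2}\le 0$ and gives $D<(2+L/3)a^{1/3}$. If $\alpha|b|>2a^{2/3}$, choose $s=a/(\alpha|b|)<a^{1/3}/2$. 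Then the first three terms sum to $a(1-\frac{1}{2\alpha})\le 0$, which gives $D<Ls/3<\frac{L}{6}a^{1/3}$.

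For the write-up:
\begin{itemize}
\item Delete the sign-of-$c$ split, the ``too coarse'' evaluation, and the ``tension'' paragraph.
\item Add the one-line reduction to $0<\alpha<1/2$. The left side of \eqref{eqn2.3} is nonincreasing in $\alpha$, and your choice $s=a/(\alpha|b|)$ needs $\alpha>0$.
\item Note that $b=0$ falls under the first case, with either sign of $t$.
\end{itemize}

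One correction to your commentary: you call this ``the crucial use of $\alpha<1/2$'', but strictness is never used. This is also the main thing your route buys. The only place $\alpha$ matters is the sign of $1-\frac{1}{2\alpha}$, so your proof gives \eqref{eqn2.3} for $\alpha=1/2$ as well. At $\alpha=1/2$ the quadratic part is the perfect square $\bigl(\sqrt{a}-\frac{|b|s}{2\sqrt{a}}\bigr)^{2}$, which vanishes at $s=2a/|b|$. The statement then becomes the scaling-sharp bound $\partial_{ee}\sqrt{h}\ge-\frac{K}{2}h^{-1/6}$. This is consistent with the paper's second remark, since the family $h_{\beta}$ gives exactly $0$ at $\alpha=1/2$.

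The proof of Lemma \ref{thm3.1} transfers verbatim to $\alpha=1/2$. The hypothesis $k\ge 5$ appears to enter the proof of Theorem \ref{thm1.1} only through $(k+2)/(3k)<1/2$. So this endpoint version would seem to cover $k=4$ as well, where $(k+2)/(3k)=1/2$.
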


\begin{remark}
In general, one cannot weaken the condition $h>0$ in $\Omega_{0}$ to $h>0$ in $\Omega$. 
A counterexample was given by B{\l}ocki \cite{Blocki2003}: the function $h(x):=x$ on $[0,1]$ is smooth, but $\sqrt{h}\notin C^{0,1}([0,1])$.  
However, the condition $h>0$ in $\Omega$ is sufficient to control derivatives of $h$ in directions tangential to $\partial\Omega$. 
\end{remark}

\begin{remark}
In general, it is impossible to improve the exponent $\alpha$ to any $\alpha>1/2$. 
Consider the family of functions $h_{\beta}(x):=(x+\beta^{1/2})^{2}$, where $\beta>0$ is a parameter.
It is clear that $h_{\beta}>0$ and $h_{\beta}\in C^{\infty}(\mathbb{R})$. 
Computing \eqref{eqn2.3} at $x=0$ and letting $\beta\rightarrow0$ yields  $\alpha\leq1/2$. 
\end{remark}

\section{Complex $k$-Hessian equation}

We establish the following lemma for real-valued functions on a compact K\"ahler manifold, whose proof is postponed to the Appendix.

\begin{lemma}\label{thm3.1}
Let $(M,\omega)$ be a compact K\"ahler manifold of complex dimension $n\geq2$, and let $\tilde{f}$ be a positive function on $M$.
Then the following hold.

$(\romannumeral1)$ If $\tilde{f}\in C^{1,1}(M)$, then 
\[
\frac{\big|\nabla_{e}\tilde{f}(x)\big|^{2}}{\tilde{f}(x)}\leq K,\quad \forall x\in M,\ \forall e\in T_{x}M,\ |e|_{\omega}=1,
\]
where $K>0$ depends on $\big\|\tilde{f}\big\|_{C^{1,1}(M)}$ and the geometry of $(M,\omega)$, but is independent of $\inf_{M}\tilde{f}$.

$(\romannumeral2)$ If $\tilde{f}\in C^{2,1}(M)$, then for any $\alpha<1/2$,
\begin{equation}\label{eqn3.21}
\nabla_{e}\nabla_{\bar{e}}\tilde{f}(x)-\alpha\frac{\big|\nabla_{e}\tilde{f}(x)\big|^{2}}{\tilde{f}(x)}\geq-K\tilde{f}^{\frac{1}{3}}(x),\quad \forall x\in M,\ \forall e\in T_{x}M,\ |e|_{\omega}=1,
\end{equation}
where $K>0$ depends on $\alpha$, $\big\|\tilde{f}\big\|_{C^{2,1}(M)}$, and the geometry of $(M,\omega)$, but is independent of $\inf_{M}\tilde{f}$.
\end{lemma}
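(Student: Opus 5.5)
The plan is to reduce Lemma \ref{thm3.1} to the Euclidean statement Lemma \ref{thm2.3} by working in local holomorphic coordinate charts and using compactness of $M$. First, cover $M$ by finitely many coordinate charts $\phi_{\ell}:U_{\ell}\to B_{2}(0)\subset\mathbb{C}^{n}\cong\mathbb{R}^{2n}$ such that the sets $V_{\ell}:=\phi_{\ell}^{-1}(B_{1}(0))$ still cover $M$. Pull back $\tilde{f}$ to $h_{\ell}:=\tilde{f}\circ\phi_{\ell}^{-1}$ on $\Omega_{0}=B_{2}(0)$, with $\Omega=B_{1}(0)$. Then $h_{\ell}>0$ on $\Omega_{0}$, and $\|h_{\ell}\|_{C^{1,1}(\overline{\Omega_{0}})}$ (resp.\ $C^{2,1}$) is bounded by $\|\tilde{f}\|_{C^{1,1}(M)}$ (resp.\ $C^{2,1}$) times constants depending only on the charts, i.e.\ on the geometry of $(M,\omega)$. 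Also $\operatorname{dist}(\Omega,\partial\Omega_{0})=1$. Because $M$ is closed, the positivity hypothesis $h>0$ on all of $\Omega_{0}$ holds automatically, so the obstruction in the Remark after Lemma \ref{thm2.3} does not arise.

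For $(\romannumeral1)$, take a unit vector $e\in T_{x}M$ with $x\in V_{\ell}$. In coordinates, $\nabla_{e}\tilde{f}$ is a linear combination of the real partial derivatives of $h_{\ell}$, with coefficients bounded by the metric comparison constants on $\overline{B_{1}}$. Hence
\[
\frac{|\nabla_{e}\tilde{f}|^{2}}{\tilde{f}}\leq C\frac{|\nabla h_{\ell}|^{2}}{h_{\ell}}\leq CK .
\]
Taking the maximum over the finitely many charts finishes this part.

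For $(\romannumeral2)$, write $e=\sum a_{i}\partial_{i}$ in local coordinates. Then
\[
\nabla_{e}\nabla_{\bar e}\tilde f=\sum a_{i}\bar a_{j}\,\partial_{i}\partial_{\bar j}h_{\ell}.
\]
Since $\tilde f$ is a function, its mixed complex Hessian involves no Christoffel terms: for a Kähler connection, $\nabla_{i}\nabla_{\bar j}\tilde f=\partial_{i}\partial_{\bar j}\tilde f$. If one prefers to use another convention, any first-order correction $\Gamma\cdot\partial h$ is bounded by $C|\nabla h|\le C\sqrt{K h}\le Ch^{1/3}$ by part $(\romannumeral1)$, since $h$ is bounded.

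Next decompose $e$ into real directions. Write $a=\xi-\sqrt{-1}\eta$, with the convention adjusted so that the identity below holds. Then $4\,\partial_{e}\partial_{\bar e}h=\partial_{XX}h+\partial_{YY}h$, where $X=\xi\cdot\partial_{x}+\eta\cdot\partial_{y}$ and $Y=J X$ are real vectors with $|X|=|Y|=|a|$. Similarly, $|\partial_{e}h|^{2}=\tfrac14(|\partial_{X}h|^{2}+|\partial_{Y}h|^{2})$. Apply \eqref{eqn2.3} to the unit directions $X/|X|$ and $Y/|Y|$ with the same $\alpha<1/2$, and add the two inequalities. This gives
\[
\partial_{e}\partial_{\bar e}h-\alpha\frac{|\partial_{e}h|^{2}}{h}\geq-\tfrac{1}{2}|a|^{2}Kh^{1/3}.
\]
Finally, use $|a|\le C$ for $|e|_{\omega}=1$ on $\overline{B_{1}}$ to obtain \eqref{eqn3.21}.

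The main obstacle, such as it is, lies in the normalization in this last step: the factor relating $\partial_{e}\partial_{\bar e}$ to the real second derivatives must match the factor relating $|\partial_{e}h|^{2}$ to the real gradients. Otherwise the admissible range $\alpha<1/2$ would be rescaled. I would verify this carefully. Because both identities carry the same factor $\tfrac14$, the ratio is preserved and the full range $\alpha<1/2$ survives. Any lower-order geometric terms are absorbed using $(\romannumeral1)$, as explained above.
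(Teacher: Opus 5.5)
Your proof is correct and follows the same strategy as the paper. Both reduce to the Euclidean Lemma~\ref{thm2.3} and write $\nabla_{e}\nabla_{\bar{e}}\tilde{f}$ as a combination of second derivatives along the real directions $X$ and $JX$; the common factor $\tfrac14$ in the Hessian and gradient identities preserves the full range $\alpha<1/2$. The only difference is how you localize. The paper takes normal holomorphic coordinates at each point and applies the one-dimensional lemma along the geodesics in those two directions. You instead use a fixed finite atlas with the $2n$-dimensional lemma and the K\"ahler identity $\nabla_{i}\nabla_{\bar{j}}\tilde{f}=\partial_{i}\partial_{\bar{j}}\tilde{f}$, absorbing the metric distortion into $|a|^{2}\leq C$; this works equally well.
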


Both $F=\sigma_{k}^{1/k}$ and $\log\sigma_{k}$ are concave in $\Gamma_{k}$, and the concavity of $\log\sigma_{k}$ follows from that of $F$.
In the proof of Theorem \ref{thm1.1}, we differentiate the equation
\[
F(W)=f^{\frac{1}{k}}.
\]
It is important to work with $F$ rather than with $\log\sigma_{k}(W)=\log f$, since our argument uses the inequality \eqref{eqn3.21}, where the admissible range of the parameter $\alpha$ is delicate.
We are now ready to prove Theorem \ref{thm1.1}.
The argument is inspired by Chou-Wang \cite{Wang2001}, Hou-Ma-Wu \cite{Hou2010}, and Dinew-Pli{\'s}-Zhang \cite{Zhang2019}.

\par
\vspace{2mm}
\begin{proof}[Proof of Theorem \ref{thm1.1}]
Denote 
\[
\alpha:=\sup_{M}|u|+1,\quad \beta:=\sup_{M}|\nabla u|^{2}+1,\quad \gamma:=\sup_{M}\left|\inf_{\eta,\zeta}R_{\eta\bar{\eta}\zeta\bar{\zeta}}\right|.
\]
Construct the auxiliary function 
\[
G(x,\xi):=\log\big(1+u_{i\bar{j}}\xi\bar{\xi}^{j}\big)+\varphi\big(|\nabla u|^{2}\big)+\psi(u)
\]
for $x\in M$ and unit vectors $\xi\in T_{x}^{1,0}M$, where 
\[
\varphi(t):=-\frac{1}{2}\log\bigg(1-\frac{t}{2\beta}\bigg)\quad \text{and}\quad \psi(t):=-3\alpha(2\gamma+1)\log\bigg(1+\frac{t}{2\alpha}\bigg).
\]
We will use the following properties of $\varphi$ and $\psi$, evaluated at $|\nabla u|^{2}$ and $u$, respectively:
\begin{equation}\label{eqn3.1}
\begin{cases}
|\varphi|\leq\log2\\
(4\beta)^{-1}\leq\varphi'\leq(2\beta)^{-1}\\
\varphi''-2\varphi'^{2}\geq0
\end{cases}
\end{equation}
and 
\begin{equation}\label{eqn3.2}
\begin{cases}
|\psi|\leq3\alpha(2\gamma+1)\log2\\
2\gamma+1\leq-\psi'\leq3(2\gamma+1)\\
\psi''-2\delta\psi'^{2}\geq0,
\end{cases}
\end{equation}
where $\delta:=1/[6\alpha(2\gamma+1)]$.
Since $M$ is compact, $G$ attains its maximum at some $x_{0}\in M$ and some unit vector $\xi_{0}\in T_{x_{0}}^{1,0}M$.
Since $M$ is K\"ahler, we choose a local normal coordinate system near $x_{0}$ such that 
\[
g_{i\bar{j}}(x_{0})=\delta_{ij},\quad \frac{\partial g_{i\bar{j}}}{\partial z^{l}}(x_{0})=0,\quad u_{i\bar{j}}(x_{0})=\delta_{ij}u_{i\bar{i}}(x_{0}),\quad u_{1\bar{1}}(x_{0})\geq\cdots\geq u_{n\bar{n}}(x_{0}).
\]
Then $W$ is diagonal at $x_{0}$, and hence so is the positive definite matrix $(F^{i\bar{j}}(W))$.
Without loss of generality, assume $u_{1\bar{1}}(x_{0})\gg1$.
Let $\lambda=(\lambda_{1},\dots,\lambda_{n})$ be the eigenvalues of $W(x_{0})$.
It follows that 
\[
\lambda_{i}=u_{i\bar{i}}(x_{0})+1\quad \text{and}\quad \lambda_{1}\geq\cdots\geq\lambda_{n}.
\]

By the definition of $H$ and the above construction, we see that $\xi_{0}$ must coincide with $\partial/\partial z^{1}$ at $x_{0}$.
We extend $\xi_{0}$ to a smooth unit vector field in a neighborhood of $x_{0}$ by setting 
\[
\xi_{0}=g^{-\frac{1}{2}}\frac{\partial}{\partial z^{1}}.
\]
Note that the function
\[
H(x):=G(x,\xi_{0})=\log\big(1+g_{1\bar{1}}^{-1}u_{1\bar{1}}\big)+\varphi\big(|\nabla u|^{2}\big)+\psi(u)
\]
is well-defined in a neighborhood of $x_{0}$, and $H$ achieves its maximum at $x_{0}$.
At $x_{0}$ we have
\begin{equation}\label{eqn3.3}
0=H_{i}=\frac{u_{1\bar{1}i}}{1+u_{1\bar{1}}}+\varphi'u_{i}u_{\bar{i}i}+\varphi'\sum_{s=1}^{n}u_{si}u_{\bar{s}}+\psi'u_{i}
\end{equation}
for $i=1,\dots,n$, and 
\begin{align}\label{eqn3.4}
0&\geq\sum_{i,j=1}^{n}F^{i\bar{j}}H_{i\bar{j}}=\sum_{i=1}^{n}F^{i\bar{i}}H_{i\bar{i}}\nonumber\\
&=\sum_{i=1}^{n}\frac{F^{i\bar{i}}u_{1\bar{1}i\bar{i}}}{1+u_{1\bar{1}}}-\sum_{i=1}^{n}\frac{F^{i\bar{i}}|u_{1\bar{1}i}|^{2}}{(1+u_{1\bar{1}})^{2}}+\varphi''\sum_{i=1}^{n}F^{i\bar{i}}\left|u_{i}u_{i\bar{i}}+\sum_{s=1}^{n}u_{si}u_{\bar{s}}\right|^{2}\nonumber\\
&\hspace{4.5mm}+\varphi'\sum_{i=1}^{n}F^{i\bar{i}}u_{i\bar{i}}^{2}+\varphi'\sum_{i,s=1}^{n}F^{i\bar{i}}|u_{si}|^{2}+\varphi'\sum_{i,s=1}^{n}F^{i\bar{i}}(u_{si\bar{i}}u_{\bar{s}}+u_{\bar{s}i\bar{i}}u_{s})\nonumber\\
&\hspace{4.5mm}+\psi''\sum_{i=1}^{n}F^{i\bar{i}}|u_{i}|^{2}+\psi'\sum_{i=1}^{n}F^{i\bar{i}}u_{i\bar{i}}\nonumber\\
&\geq\sum_{i=1}^{n}\frac{F^{i\bar{i}}u_{1\bar{1}i\bar{i}}}{1+u_{1\bar{1}}}-\sum_{i=1}^{n}\frac{F^{i\bar{i}}|u_{1\bar{1}i}|^{2}}{(1+u_{1\bar{1}})^{2}}+\varphi''\sum_{i=1}^{n}F^{i\bar{i}}\left|u_{i}u_{i\bar{i}}+\sum_{s=1}^{n}u_{si}u_{\bar{s}}\right|^{2}\\
&\hspace{4.5mm}+\varphi'\sum_{i=1}^{n}F^{i\bar{i}}u_{i\bar{i}}^{2}+\varphi'\sum_{i,s=1}^{n}F^{i\bar{i}}(u_{si\bar{i}}u_{\bar{s}}+u_{\bar{s}i\bar{i}}u_{s})+\psi''\sum_{i=1}^{n}F^{i\bar{i}}|u_{i}|^{2}+\psi'\sum_{i=1}^{n}F^{i\bar{i}}u_{i\bar{i}}.\nonumber
\end{align}
By commuting the covariant derivatives, we have 
\[
u_{i\bar{j}s}=u_{is\bar{j}}-\sum_{a}u_{a}R^{a}_{is\bar{j}}\quad \text{and}\quad  u_{i\bar{j}s\bar{t}}=u_{s\bar{t}i\bar{j}}+\sum_{a}u_{a\bar{j}}R^{a}_{is\bar{t}}-\sum_{b}u_{b\bar{t}}R^{b}_{is\bar{j}}.
\]
At $x_{0}$ we have
\begin{equation}\label{eqn3.5}
\sum_{i=1}^{n}F^{i\bar{i}}u_{si\bar{i}}=\Big(f^{\frac{1}{k}}\Big)_{s}+\sum_{i,t=1}^{n}u_{t}F^{i\bar{i}}R_{s\bar{t}i\bar{i}}\quad \text{for}\ s=1,\dots,n,
\end{equation}
\begin{equation}\label{eqn3.6}
\sum_{i=1}^{n}F^{i\bar{i}}u_{1\bar{1}i\bar{i}}=\Big(f^{\frac{1}{k}}\Big)_{1\bar{1}}-\sum_{i,j,s,t=1}^{n}F^{i\bar{j},s\bar{t}}u_{i\bar{j}1}u_{s\bar{t}\bar{1}}+\sum_{i=1}^{n}F^{i\bar{i}}(u_{1\bar{1}}-u_{i\bar{i}})R_{1\bar{1}i\bar{i}}.
\end{equation}
It follows from \eqref{eqn3.1} and \eqref{eqn3.5} that
\begin{align}\label{eqn3.7}
\varphi'\sum_{i,s=1}^{n}F^{i\bar{i}}(u_{si\bar{i}}u_{\bar{s}}+u_{\bar{s}i\bar{i}}u_{s})&=\varphi'\sum_{s=1}^{n}\left[\Big(f^{\frac{1}{k}}\Big)_{s}u_{\bar{s}}+\Big(f^{\frac{1}{k}}\Big)_{\bar{s}}u_{s}\right]+\varphi'\sum_{i,s,t=1}^{n}u_{\bar{s}}u_{t}F^{i\bar{i}}R_{i\bar{i}s\bar{t}}\nonumber\\
&\geq-2\varphi'|\nabla u|\left|\nabla\Big(f^{\frac{1}{k}}\Big)\right|-\gamma\varphi'|\nabla u|^{2}\operatorname{tr}F^{i\bar{j}}\nonumber\\
&\geq-\left|\nabla\Big(f^{\frac{1}{k}}\Big)\right|-\gamma\operatorname{tr}F^{i\bar{j}}.
\end{align}
Moreover,
\begin{align}\label{eqn3.8}
\varphi'\sum_{i=1}^{n}F^{i\bar{i}}u_{i\bar{i}}^{2}+\psi'\sum_{i=1}^{n}F^{i\bar{i}}u_{i\bar{i}}&=\varphi'\sum_{i=1}^{n}F^{i\bar{i}}\big(\lambda_{i}^{2}-2\lambda_{i}+1\big)+\psi'\sum_{i=1}^{n}F^{i\bar{i}}(\lambda_{i}-1)\nonumber\\
&=\varphi'\sum_{i=1}^{n}F^{i\bar{i}}\lambda_{i}^{2}+\big(\varphi'-\psi'\big)\operatorname{tr}F^{i\bar{j}}+\big(\psi'-2\varphi'\big)f^{\frac{1}{k}}.
\end{align}
Combining \eqref{eqn3.4}--\eqref{eqn3.8}, we obtain
\begin{align}\label{eqn3.9}
0&\geq-\sum_{i,j,s,t=1}^{n}\frac{F^{i\bar{j},s\bar{t}}u_{i\bar{j}1}u_{s\bar{t}\bar{1}}}{1+u_{1\bar{1}}}-\sum_{i=1}^{n}\frac{F^{i\bar{i}}|u_{1\bar{1}i}|^{2}}{(1+u_{1\bar{1}})^{2}}+\varphi''\sum_{i=1}^{n}F^{i\bar{i}}\left|u_{i}u_{i\bar{i}}+\sum_{s=1}^{n}u_{si}u_{\bar{s}}\right|^{2}\nonumber\\
&\hspace{4.5mm}+\psi''\sum_{i=1}^{n}F^{i\bar{i}}|u_{i}|^{2}+\varphi'\sum_{i=1}^{n}F^{i\bar{i}}\lambda_{i}^{2}+\big(-\psi'+\varphi'-2\gamma\big)\operatorname{tr}F^{i\bar{j}}+\mathbf{J},
\end{align}
where 
\begin{equation}\label{eqn3.10}
\mathbf{J}:=\frac{1}{1+u_{1\bar{1}}}\Big(f^{\frac{1}{k}}\Big)_{1\bar{1}}-\left|\nabla\Big(f^{\frac{1}{k}}\Big)\right|+\big(\psi'-2\varphi'-\gamma\big)f^{\frac{1}{k}}.
\end{equation}

Up to this point, we have followed Hou-Ma-Wu \cite{Hou2010}.
By our choice,
\begin{equation}\label{eqn3.11}
-\psi'+\varphi'-2\gamma\geq1.
\end{equation}
We note that the inequality $\psi''-\kappa\psi'^{2}\geq0$ is equivalent to 
\[
\bigg(\frac{1}{-\psi'}\bigg)'\geq\kappa,
\]
and hence one can only expect this to hold for small $\kappa>0$, since $-\psi'$ is large.
Hou-Ma-Wu \cite{Hou2010} used a very good case-by-case argument different from that of  Chou-Wang \cite{Wang2001}.
We next show that the case-by-case argument in \cite[Theorem 4.1]{Wang2001} is also applicable here.

\par
\vspace{2mm}
\textbf{Case 1:} $\lambda_{k}\leq\varepsilon\lambda_{1}$, where $\varepsilon>0$ is to be determined.
Using \eqref{eqn3.3} and \eqref{eqn3.1}, we estimate the combination of the second, third, and fifth terms in \eqref{eqn3.9} for $i=1$ as 
\begin{align}\label{eqn3.12}
&\hspace{4.5mm}-\frac{F^{1\bar{1}}|u_{1\bar{1}1}|^{2}}{(1+u_{1\bar{1}})^{2}}+\varphi''F^{1\bar{1}}\left|u_{1}u_{1\bar{1}}+\sum_{s=1}^{n}u_{s1}u_{\bar{s}}\right|^{2}+\varphi'F^{1\bar{1}}\lambda_{1}^{2}\nonumber\\
&=-F^{1\bar{1}}\left(\varphi'u_{1}u_{\bar{1}1}+\varphi'\sum_{s=1}^{n}u_{s1}u_{\bar{s}}+\psi'u_{1}\right)^{2}+\varphi''F^{1\bar{1}}\left|u_{1}u_{1\bar{1}}+\sum_{s=1}^{n}u_{s1}u_{\bar{s}}\right|^{2}+\varphi'F^{1\bar{1}}\lambda_{1}^{2}\nonumber\\
&\geq\big(\varphi''-2\varphi'^{2}\big)F^{1\bar{1}}\left|u_{1}u_{1\bar{1}}+\sum_{s=1}^{n}u_{s1}u_{\bar{s}}\right|^{2}+F^{1\bar{1}}\big(\varphi'\lambda_{1}^{2}-2\psi'^{2}|u_{1}|^{2}\big)\nonumber\\
&\geq F^{1\bar{1}}\bigg(\frac{1}{4\beta}\lambda_{1}^{2}-2\psi'^{2}\beta\bigg)\nonumber\\
&\geq0,
\end{align}
where the last inequality holds, since otherwise we are done.
Next, we control the second term in \eqref{eqn3.9} for $i=2,\dots,n$ by the first, third, and fourth terms in \eqref{eqn3.9}. 
Using \eqref{eqn3.3}, \eqref{eqn3.1}, \eqref{eqn2.2}, and the concavity of $\sigma_{k}^{1/k}(\lambda)$, we have
\begin{align}\label{eqn3.13}
&\hspace{4.5mm}-\sum_{i,j,s,t=1}^{n}\frac{F^{i\bar{j},s\bar{t}}u_{i\bar{j}1}u_{s\bar{t}\bar{1}}}{1+u_{1\bar{1}}}-(1-\delta)\sum_{i=2}^{n}\frac{F^{i\bar{i}}|u_{1\bar{1}i}|^{2}}{(1+u_{1\bar{1}})^{2}}\nonumber\\
&\hspace{4.5mm}-\delta\sum_{i=2}^{n}\frac{F^{i\bar{i}}|u_{1\bar{1}i}|^{2}}{(1+u_{1\bar{1}})^{2}}+\varphi''\sum_{i=2}^{n}F^{i\bar{i}}\left|u_{i}u_{i\bar{i}}+\sum_{s=1}^{n}u_{si}u_{\bar{s}}\right|^{2}+\psi''\sum_{i=1}^{n}F^{i\bar{i}}|u_{i}|^{2}\nonumber\\
&\geq-\sum_{i,j,s,t=1}^{n}\frac{F^{i\bar{j},s\bar{t}}u_{i\bar{j}1}u_{s\bar{t}\bar{1}}}{1+u_{1\bar{1}}}-(1-\delta)\sum_{i=2}^{n}\frac{F^{i\bar{i}}|u_{1\bar{1}i}|^{2}}{(1+u_{1\bar{1}})^{2}}\nonumber\\
&\hspace{4.5mm}+\big(\varphi''-2\delta\varphi'^{2}\big)\sum_{i=2}^{n}F^{i\bar{i}}\left|u_{i}u_{i\bar{i}}+\sum_{s=1}^{n}u_{si}u_{\bar{s}}\right|^{2}+\big(\psi''-2\delta\psi'^{2}\big)\sum_{i=2}^{n}F^{i\bar{i}}|u_{i}|^{2}\nonumber\\
&\geq-\sum_{i\neq s}\frac{F^{i\bar{s},s\bar{i}}u_{i\bar{s}1}u_{s\bar{i}\bar{1}}}{1+u_{1\bar{1}}}-(1-\delta)\sum_{i=2}^{n}\frac{F^{i\bar{i}}|u_{1\bar{1}i}|^{2}}{(1+u_{1\bar{1}})^{2}}\nonumber\\
&\geq-\sum_{i=2}^{n}\frac{F^{i\bar{1},1\bar{i}}|u_{1\bar{1}i}|^{2}}{1+u_{1\bar{1}}}-(1-\delta)\sum_{i=2}^{n}\frac{F^{i\bar{i}}|u_{1\bar{1}i}|^{2}}{(1+u_{1\bar{1}})^{2}}\nonumber\\
&=\frac{1}{\lambda_{1}}\frac{1}{k}\sigma_{k}^{1/k-1}\sum_{i=2}^{n}\left(\sigma_{k-2;i1}-(1-\delta)\frac{\sigma_{k-1;i}}{\lambda_{1}}\right)|u_{1\bar{1}i}|^{2}.
\end{align}
It follows from Chou-Wang \cite[Lemma 3.1]{Wang2001} that there exists a small constant $\varepsilon>0$, depending on $\delta$, such that 
\begin{equation}\label{eqn3.14}
\sigma_{k-2;i1}-(1-\delta)\frac{\sigma_{k-1;i}}{\lambda_{1}}\geq0\quad \text{for}\ i=2,\dots,n.
\end{equation}
It follows from \eqref{eqn3.9} and \eqref{eqn3.11}--\eqref{eqn3.14} that 
\begin{equation}\label{eqn3.15}
0\geq\operatorname{tr}F^{i\bar{j}}+\mathbf{J}.
\end{equation}

\par
\vspace{2mm}
\textbf{Case 2:} $\lambda_{k}>\varepsilon\lambda_{1}$.
In this case, the fifth term in \eqref{eqn3.9} is sufficient.
By Chou-Wang \cite[inequality (3.2)]{Wang2001}, we have 
\begin{equation}\label{eqn3.16}
\varphi'\sum_{i=1}^{n}F^{i\bar{i}}\lambda_{i}^{2}\geq\varphi'F^{kk}\lambda_{k}^{2}\geq\frac{1}{C}\varphi'\lambda_{k}^{2}\operatorname{tr}F^{i\bar{j}}\geq\frac{\varepsilon^{2}}{4C\beta}\lambda_{1}^{2}\operatorname{tr}F^{i\bar{j}},
\end{equation}
where $C>0$ is universal.
Using \eqref{eqn3.3} and \eqref{eqn3.1}, we estimate the second and third terms in \eqref{eqn3.9} as 
\begin{align}\label{eqn3.17}
&\hspace{4.5mm}-\sum_{i=1}^{n}\frac{F^{i\bar{i}}|u_{1\bar{1}i}|^{2}}{(1+u_{1\bar{1}})^{2}}+\varphi''\sum_{i=1}^{n}F^{i\bar{i}}\left|u_{i}u_{i\bar{i}}+\sum_{s=1}^{n}u_{si}u_{\bar{s}}\right|^{2}\nonumber\\
&=-\sum_{i=1}^{n}F^{i\bar{i}}\left(\varphi'u_{i}u_{\bar{i}i}+\varphi'\sum_{s=1}^{n}u_{si}u_{\bar{s}}+\psi'u_{i}\right)^{2}+\varphi''\sum_{i=1}^{n}F^{i\bar{i}}\left|u_{i}u_{i\bar{i}}+\sum_{s=1}^{n}u_{si}u_{\bar{s}}\right|^{2}\nonumber\\
&\geq\big(\varphi''-2\varphi'^{2}\big)\sum_{i=1}^{n}F^{i\bar{i}}\left|u_{i}u_{i\bar{i}}+\sum_{s=1}^{n}u_{si}u_{\bar{s}}\right|^{2}-2\psi'^{2}\sum_{i=1}^{n}F^{i\bar{i}}|u_{i}|^{2}\nonumber\\
&\geq-2\beta\psi'^{2}\operatorname{tr}F^{i\bar{j}}.
\end{align}
It follows from \eqref{eqn3.9}, \eqref{eqn3.11}, \eqref{eqn3.16}, \eqref{eqn3.17}, and the concavity of $F$ that 
\begin{equation}\label{eqn3.18}
0\geq\left(\frac{\varepsilon^{2}}{4C\beta}\lambda_{1}^{2}+1-2\beta\psi'^{2}\right)\operatorname{tr}F^{i\bar{j}}+\mathbf{J}\geq\operatorname{tr}F^{i\bar{j}}+\mathbf{J},
\end{equation}
where the last inequality holds, since otherwise we are done.

Finally, we estimate $\mathbf{J}$.
For convenience, set $\tilde{f}:=f^{3/(2k-2)}$.
It follows from \eqref{eqn3.10} that 
\begin{align*}
\mathbf{J}&\geq-\left|\Big(f^{\frac{1}{k}}\Big)_{1\bar{1}}\right|-\left|\nabla\Big(f^{\frac{1}{k}}\Big)\right|+\big(\psi'-2\varphi'-\gamma\big)f^{\frac{1}{k}}\nonumber\\
&=-\frac{2k-2}{3k}\tilde{f}^{\frac{-k-2}{3k}}\left|\tilde{f}_{1\bar{1}}-\frac{k+2}{3k}\frac{\big|\tilde{f}_{1}\big|^{2}}{\tilde{f}}\right|-\frac{2k-2}{3k}\tilde{f}^{\frac{-2}{3k}+\frac{1}{6}}\frac{\big|\nabla\tilde{f}\big|}{\tilde{f}^{1/2}}+\big(\psi'-2\varphi'-\gamma\big)f^{\frac{1}{k}}.
\end{align*}
Since $(k+2)/(3k)<1/2$ for $k\geq5$, Lemma \ref{thm3.1} yields 
\begin{equation}\label{eqn3.19}
\mathbf{J}\geq-K_{3}f^{-\frac{1}{k(k-1)}}+\big(\psi'-2\varphi'-\gamma\big)f^{\frac{1}{k}},
\end{equation}
where $K_{3}>0$ depends on $\|f^{3/(2k-2)}\|_{C^{2,1}(M)}$ and the geometry of $(M,\omega)$, but is independent of $\inf_{M}f$.
Recalling \eqref{eqn2.1}, 
\begin{equation}\label{eqn3.20}
\operatorname{tr}F^{i\bar{j}}\geq C\sigma_{1}^{\frac{1}{k-1}}f^{-\frac{1}{k(k-1)}},
\end{equation}
where $C>0$ is universal.
Combining \eqref{eqn3.15}, \eqref{eqn3.18}, \eqref{eqn3.19}, and \eqref{eqn3.20}, we complete the proof of Theorem \ref{thm1.1}. 
\end{proof}

\section{Christoffel-Minkowski equation}

We establish the following lemma for real-valued functions on $\mathbb{S}^{n}$, whose proof is postponed to the Appendix. 

\begin{lemma}\label{thm4.1}
Let $\tilde{f}$ be a positive function on $\mathbb{S}^{n}$ with $\tilde{f}\in C^{2,1}(\mathbb{S}^{n})$.
Then for any $\alpha<1/2$, 
\[
\nabla_{ee}^{2}\tilde{f}(x)-\alpha\frac{\big|\nabla_{e}\tilde{f}(x)\big|^{2}}{\tilde{f}(x)}\geq-K\tilde{f}^{\frac{1}{3}}(x),\quad \forall x\in\mathbb{S}^{n},\ \forall e\in T_{x}\mathbb{S}^{n},\ |e|=1,
\]
where $K>0$ depends on $\alpha$ and $\big\|\tilde{f}\big\|_{C^{2,1}(\mathbb{S}^{n})}$, but is independent of $\inf_{\mathbb{S}^{n}}\tilde{f}$.
\end{lemma}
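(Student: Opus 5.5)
The plan is to reduce Lemma \ref{thm4.1} to the Euclidean statement Lemma \ref{thm2.3}(ii) using geodesic coordinates on $\mathbb{S}^{n}$. The key structural fact is that $\mathbb{S}^{n}$ is compact and $\tilde{f}>0$ \emph{everywhere} on it. Hence around each point there is a full neighborhood on which $\tilde{f}>0$, and the counterexample of B{\l}ocki from the Remark does not apply.

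Fix $x_{0}\in\mathbb{S}^{n}$ and a unit vector $e\in T_{x_{0}}\mathbb{S}^{n}$. Let $\exp_{x_{0}}$ denote the exponential map, and put $h:=\tilde{f}\circ\exp_{x_{0}}$ on the ball $B_{2r_{0}}(0)\subset T_{x_{0}}\mathbb{S}^{n}\cong\mathbb{R}^{n}$, with $r_{0}=\pi/8$ fixed.

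Several properties of this chart are uniform in $x_{0}$ by the homogeneity of the sphere. The maps $\exp_{x_{0}}$ on $B_{2r_{0}}$ are all isometric to one another under rotations, so $\|h\|_{C^{2,1}(\overline{B_{2r_{0}}})}\le C\|\tilde{f}\|_{C^{2,1}(\mathbb{S}^{n})}$ with $C$ universal. Moreover $h>0$ on $B_{2r_{0}}$.

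Apply Lemma \ref{thm2.3}(ii) with $\Omega=B_{r_{0}}$ and $\Omega_{0}=B_{2r_{0}}$, at the point $0$ and in the direction $e$. This gives
\[
\partial_{ee}h(0)-\alpha\frac{|\partial_{e}h(0)|^{2}}{h(0)}\ge -Kh^{\frac13}(0),
\]
with $K$ depending only on $\alpha$ and $\|\tilde{f}\|_{C^{2,1}(\mathbb{S}^{n})}$.

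To transfer this back to $\mathbb{S}^{n}$, I use normal coordinates at $x_{0}$. There the Christoffel symbols vanish at $0$, so $\nabla^{2}_{ee}\tilde{f}(x_{0})=\partial_{ee}h(0)$ and $\nabla_{e}\tilde{f}(x_{0})=\partial_{e}h(0)$ exactly. Even more simply, $t\mapsto\exp_{x_{0}}(te)$ is a geodesic, so $\nabla^{2}_{ee}\tilde{f}(x_{0})=\frac{d^{2}}{dt^{2}}\tilde{f}(\exp_{x_{0}}(te))|_{t=0}=\partial_{ee}h(0)$. This yields the claimed inequality at $x_{0}$ for the direction $e$. Since the constant does not depend on $x_{0}$ or $e$, the estimate holds for all $x\in\mathbb{S}^{n}$ and all unit $e\in T_{x}\mathbb{S}^{n}$.

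The only step needing care is the uniform control $\|h\|_{C^{2,1}}\lesssim\|\tilde{f}\|_{C^{2,1}(\mathbb{S}^{n})}$. This requires fixing a definition of $C^{2,1}(\mathbb{S}^{n})$, for example via a finite atlas or via covariant derivatives, and checking that the exponential charts of radius $2r_{0}$ have uniformly bounded $C^{3,1}$ transition maps to that atlas. On the round sphere this follows from rotational symmetry, since all such charts are rotations of a single fixed chart. So I expect no genuine obstacle, only bookkeeping. The same argument, with holomorphic normal coordinates, would presumably give Lemma \ref{thm3.1}, where compactness of $M$ replaces homogeneity.
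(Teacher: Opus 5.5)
Your proposal is correct and follows essentially the paper's route: pull $\tilde{f}$ back along the exponential map at $x_{0}$, use that the second derivative along the geodesic $t\mapsto\exp_{x_{0}}(te)$ equals $\nabla^{2}_{ee}\tilde{f}(x_{0})$, and invoke Lemma~\ref{thm2.3}(ii) with a uniform bound on the $C^{2,1}$ norm. The only cosmetic difference is that the paper applies Lemma~\ref{thm2.3} to the one-variable function $t\mapsto\tilde{f}(\gamma(t))$ on $[-\varepsilon,\varepsilon]$ rather than to $\tilde{f}\circ\exp_{x_{0}}$ on an $n$-dimensional ball, and your rotational-symmetry remark makes explicit the uniformity in $x_{0}$ that the paper simply asserts.
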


For convenience, let $W=(W_{ij})$ with $W_{ij}:=u_{ij}+u\delta_{ij}$.
In the following lemma, we derive a bound on the eigenvalues of $W$.

\begin{lemma}\label{thm4.2}
Let $u\in C^{4}(\mathbb{S}^{n})$ be a $k$-admissible solution of \eqref{eqn1.7}, and $\inf_{\mathbb{S}^{n}}f>0$.
Then  
\[
|\lambda(W)|\leq C\quad \text{on}\ \mathbb{S}^{n},
\]
where $C>0$ depends on $\|f^{3/(2k-2)}\|_{C^{2,1}(\mathbb{S}^{n})}$, but is independent of $\inf_{\mathbb{S}^{n}}f$.
\end{lemma}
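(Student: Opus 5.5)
Since $\lambda(W)\in\Gamma_{k}\subset\Gamma_{1}$, we have $\lambda_{i}\geq-(n-1)\lambda_{\max}$, so it suffices to bound $\lambda_{\max}(W)$ from above. I would follow the maximum principle argument of Guan-Zhang for the condition $f^{1/(k-1)}\in C^{1,1}$, but handle the right-hand side as in the proof of Theorem \ref{thm1.1}. On $\mathbb{S}^{n}$ the natural quantity is $\sigma_{1}(W)=\Delta u+nu$. The zeroth and first order terms are controlled by translation invariance. Indeed, $u$ is determined up to adding $\langle a,x\rangle$, so I normalize $u$ to be orthogonal to linear functions. Then $\int\sigma_{1}(W)$ is bounded by $\int u$, which is controlled via the Minkowski-type integral identities from $\int \sigma_{k}(W)u=c\int f u$ (the standard Guan-Ma $C^{0}$, $C^{1}$ estimates). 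These estimates depend only on $\int f$ and $\|f\|_{C^{0}}$, not on $\inf f$.

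For the second-order bound, consider the maximum over $x\in\mathbb{S}^{n}$ and unit $e\in T_{x}\mathbb{S}^{n}$ of $W_{ee}$; alternatively use $\sigma_{1}(W)$ as test function. At the maximum point, rotate so that $W$ is diagonal and $e=e_{1}$. Differentiate $F(W)=f^{1/k}$ twice in $e_{1}$ and use the commutation formulas on the sphere:
\[
W_{11ii}=W_{ii11}+W_{11}-W_{ii}.
\]
This gives
\[
0\geq F^{ii}W_{11ii}=(f^{1/k})_{11}-F^{ij,st}W_{ij1}W_{st1}+W_{11}\operatorname{tr}F^{ij}-f^{1/k}.
\]
Concavity of $F$ makes the second-derivative term nonnegative. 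Note that no gradient terms are needed: the sphere's curvature supplies the good term $\lambda_{1}\operatorname{tr}F^{ij}$, and there is no first-derivative term in the test function. So we obtain
\[
0\geq \lambda_{1}\operatorname{tr}F^{ij}+(f^{1/k})_{11}-f^{1/k}.
\]
One subtlety: the test function is $W_{11}$ itself rather than $\log W_{11}$, which avoids the problematic $-|W_{11i}|^{2}/W_{11}^{2}$ terms. This is exactly the point where the $C^{1,1}$ result differs from the almost $C^{1,1}$ result on complex manifolds.

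Next, estimate $(f^{1/k})_{11}$ exactly as in the computation of $\mathbf{J}$, with $\tilde f=f^{3/(2k-2)}$:
\[
(f^{1/k})_{11}=\tfrac{2k-2}{3k}\tilde f^{\frac{-k-2}{3k}}\Big(\tilde f_{11}-\tfrac{k+2}{3k}\tfrac{|\tilde f_{1}|^{2}}{\tilde f}\Big).
\]
The main obstacle is that here we need a one-sided bound valid for all $k\geq2$, whereas $(k+2)/(3k)<1/2$ requires $k\geq5$. For the lower bound, I would rewrite $(f^{1/k})_{11}$ via $\tilde f^{\theta}$ with the weight chosen so that the coefficient of $|\tilde f_{1}|^{2}/\tilde f$ is $\leq0$, giving a favorable sign, or $<1/2$ otherwise. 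When the coefficient is $\geq 1/2$, split it as $\alpha+(\text{rest})$ with $\alpha<1/2$ and absorb the remainder using Lemma \ref{thm2.3}(i)-type bounds: $|\tilde f_{1}|^{2}/\tilde f\leq K\tilde f^{1/3}$, which follows from $\tilde f\in C^{2,1}$ (gradient bounded by $\tilde f^{2/3}$). Lemma \ref{thm4.1} then gives
\[
(f^{1/k})_{11}\geq -K\tilde f^{\frac{-k-2}{3k}+\frac13}=-Kf^{-\frac{1}{k(k-1)}}.
\]
Combining this with \eqref{eqn2.1},
\[
\operatorname{tr}F^{ij}\geq C\sigma_{1}^{1/(k-1)}f^{-\frac{1}{k(k-1)}},
\]
yields
\[
\lambda_{1}\sigma_{1}^{1/(k-1)}\leq C',
\]
since the $f$-powers cancel. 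This bounds $\lambda_{1}$ independently of $\inf f$. Verifying the gradient-absorption inequality $|\nabla\tilde f|\leq K\tilde f^{2/3}$ is the key step that I expect to require care.
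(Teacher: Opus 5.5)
Your argument is fine for $k\geq5$: there Lemma \ref{thm4.1} applies with $\alpha=\frac{k+2}{3k}<\frac12$, exactly as in the estimate of $\mathbf{J}$. The lemma, however, is claimed for every $k$, and for $k\in\{2,3,4\}$ there is a genuine gap.

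You discard the whole term $-F^{ij,st}W_{ij1}W_{st1}$ using only concavity of $\sigma_k^{1/k}$. To close the estimate you then need the pointwise bound $(f^{1/k})_{11}\geq-Kf^{-\frac{1}{k(k-1)}}$, i.e.
\[
\tilde f_{11}-\frac{k+2}{3k}\,\frac{|\tilde f_{1}|^{2}}{\tilde f}\geq-K\tilde f^{\frac13},
\]
and $\frac{k+2}{3k}\geq\frac12$ precisely when $k\leq4$.

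Your proposed absorption $|\nabla\tilde f|^{2}/\tilde f\leq K\tilde f^{1/3}$ (i.e.\ $|\nabla\tilde f|\leq K\tilde f^{2/3}$) is false for positive $C^{2,1}$ functions. Take $\tilde f=(x+\beta^{1/2})^{2}+\beta^{2}$ at $x=0$; on $\mathbb{S}^n$ use $\tilde f=(x_{1}+\beta^{1/2})^{2}+\beta^{2}$ at a point with $x_1=0$. Then $|\tilde f'|^{2}/\tilde f\to4$ while $\tilde f^{1/3}\to0$ as $\beta\to0$. The true bound, Lemma \ref{thm2.3}(i), only gives $|\nabla\tilde f|^{2}/\tilde f\leq K$. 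That leaves an error of size $\tilde f^{-\frac{k+2}{3k}}$, which blows up faster than the factor $f^{-\frac{1}{k(k-1)}}=\tilde f^{-\frac{2}{3k}}$ in your lower bound for $\operatorname{tr}F^{ij}$, so your bound on $\lambda_1$ would depend on $\inf f$.

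Worse, the same family shows the required bound itself fails for $k=2,3$: $\tilde f''-\frac{k+2}{3k}|\tilde f'|^{2}/\tilde f\to2-\frac{4(k+2)}{3k}<0$, while $K\tilde f^{1/3}\to0$. For $k=4$ you are at the borderline $\alpha=\frac12$, which Lemma \ref{thm4.1} does not cover. Since $(f^{1/k})_{11}$ is a fixed expression, no re-weighting through $\tilde f^{\theta}$ can help. Concavity of $F$ alone is too weak here; this is exactly why Theorem \ref{thm1.1} is restricted to $k\geq5$.

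The missing idea is to use the first-order condition, not to avoid it. Your remark that ``no gradient terms are needed'' is the wrong diagnosis, and so is attributing the $C^{1,1}$ versus almost-$C^{1,1}$ difference to $W_{11}$ versus $\log W_{11}$. The paper proceeds as follows:
\begin{itemize}
\item Take $H=\sigma_{1}(W)$ as the test function.
\item Apply $\Delta$ to $\sigma_{k}(W)=f$ itself, rather than to $F(W)=f^{1/k}$.
\item Estimate $-\sigma_{k}^{ij,st}\nabla_{l}W_{ij}\nabla_{l}W_{st}$ by Lemma \ref{thm2.2} (concavity of $(\sigma_{k}/\sigma_{1})^{1/(k-1)}$) with $A=\nabla_{l}W$.
\end{itemize}
At the maximum point $\sigma_{1}^{ij}\nabla_{l}W_{ij}=\nabla_{l}H=0$, so both terms containing $\sigma_{1}^{ij}A_{ij}$ drop out and
\[
-\sigma_{k}^{ij,st}\nabla_{l}W_{ij}\nabla_{l}W_{st}\geq-\frac{k-2}{k-1}\,\frac{|\nabla_{l}f|^{2}}{f}.
\]
This improves on the constant $\frac{k-1}{k}$ that concavity of $\sigma_k^{1/k}$ alone gives. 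With $f=\tilde f^{p}$, $p=\frac{2k-2}{3}$, one gets, for every $k$,
\[
\Delta f-\frac{k-2}{k-1}\frac{|\nabla f|^{2}}{f}=p\tilde f^{p-1}\sum_{l}\Big(\nabla_{ll}^{2}\tilde f-\frac13\frac{|\nabla_{l}\tilde f|^{2}}{\tilde f}\Big).
\]
Since $\frac13<\frac12$, Lemma \ref{thm4.1} bounds this below by $-Kf^{1-\frac{1}{k-1}}$. That power of $f$ matches the curvature term $H\operatorname{tr}\sigma_{k}^{ij}$ combined with $\operatorname{tr}\sigma_{k}^{ij}\geq CH^{\frac{1}{k-1}}f^{1-\frac{1}{k-1}}$, which yields $H\leq C$.

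Your preliminary discussion of normalizations and $C^{0}$, $C^{1}$ estimates is unnecessary, since the bound on $\sigma_{1}(W)$ does not involve $u$.
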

\begin{proof}
Construct the auxiliary function
\[
H:=\sigma_{1}(W)=\Delta u+nu\quad \text{on}\ \mathbb{S}^{n},
\]
where $\Delta u=\sum_{i=1}^{n}\nabla_{ii}^{2}u$.
Let $x_{0}\in\mathbb{S}^{n}$ be a point where $H$ attains its maximum.
Choose an orthonormal local frame $\{e_{i}\}_{i=1}^{n}$ near $x_{0}$ such that 
$u_{ij}(x_{0})=\delta_{ij}u_{ii}(x_{0})$.
Then both $W(x_{0})$ and $(\sigma_{k}^{ij}(W(x_{0})))$ are diagonal.
We have at $x_{0}$,
\begin{equation}\label{eqn4.1}
\nabla_{ii}^{2}H=\Delta(W_{ii})-nW_{ii}+H,\quad \forall1\leq i\leq n.
\end{equation}
Since $H$ attains its maximum at $x_{0}$, we have at $x_{0}$,
\begin{equation}\label{eqn4.2}
0=\nabla_{i}H,\quad \forall1\leq i\leq n,
\end{equation}
and 
\begin{align}\label{eqn4.3}
0\geq\sum_{i=1}^{n}\sigma_{k}^{ii}(W)\nabla_{ii}^{2}H&=\sum_{i=1}^{n}\sigma_{k}^{ii}(W)\Delta(W_{ii})-\sum_{i=1}^{n}n\sigma_{k}^{ii}(W)W_{ii}+H\operatorname{tr}\sigma_{k}^{ij}(W)\nonumber\\
&=\sum_{i=1}^{n}\sigma_{k}^{ii}(W)\Delta(W_{ii})-nkf+H\operatorname{tr}\sigma_{k}^{ij}(W).
\end{align}
Applying the Laplacian to $\sigma_{k}(W)=f$, we obtain
\begin{equation}\label{eqn4.4}
\sum_{i,j=1}^{n}\sigma_{k}^{ij}(W)\Delta(W_{ij})=\Delta f-\sum_{i,j,s,t,l=1}^{n}\sigma_{k}^{ij,st}(W)\nabla_{l}(W_{ij})\nabla_{l}(W_{st}).
\end{equation}
It follows from Lemma \ref{thm2.2} and \eqref{eqn4.2} that at $x_{0}$, 
\begin{equation}\label{eqn4.6}
-\sum_{i,j,s,t,l=1}^{n}\sigma_{k}^{ij,st}(W)\nabla_{l}(W_{ij})\nabla_{l}(W_{st})\geq -\sum_{l=1}^{n}\frac{k-2}{k-1}\frac{|\nabla_{l}f|^{2}}{f}.
\end{equation}
Therefore, by \eqref{eqn4.3}, \eqref{eqn4.4}, and \eqref{eqn4.6}, 
\begin{equation}\label{eqn4.7}
0\geq\Delta f-\sum_{l=1}^{n}\frac{k-2}{k-1}\frac{|\nabla_{l}f|^{2}}{f}-nkf+H\operatorname{tr}\sigma_{k}^{ij}(W).
\end{equation}

By Lemma \ref{thm2.1}, we have 
\begin{equation}\label{eqn4.8}
\operatorname{tr}\sigma_{k}^{ij}(W)=\sum_{i=1}^{n}\sigma_{k-1;i}(\lambda(W))\geq CH^{\frac{1}{k-1}}f^{1-\frac{1}{k-1}},
\end{equation}
where $C>0$ is universal.
For convenience, set
\[
\tilde{f}:=f^{\frac{3}{2k-2}}\quad \text{and}\quad p:=\frac{2k-2}{3}.
\]
Using Lemma \ref{thm4.1}, we derive
\begin{equation}\label{eqn4.9}
\Delta f-\sum_{l=1}^{n}\frac{k-2}{k-1}\frac{|\nabla_{l}f|^{2}}{f}=p\tilde{f}^{p-1}\sum_{l=1}^{n}\left(\nabla_{ll}^{2}\tilde{f}-\frac{1}{3}\frac{\big|\nabla_{l}\tilde{f}\big|^{2}}{\tilde{f}}\right)\geq-Kf^{1-\frac{1}{k-1}},
\end{equation}
where $K>0$ depends on $\|f^{3/(2k-2)}\|_{C^{2,1}(\mathbb{S}^{n})}$.
Combining \eqref{eqn4.7}--\eqref{eqn4.9}, we obtain 
\[
0\geq\Big(CH^{\frac{k}{k-1}}-K\Big)f^{1-\frac{1}{k-1}}-nkf
\]
at $x_{0}$.
Therefore, 
\[
\sup_{\mathbb{S}^{n}}\sigma_{1}(W)=H(x_{0})\leq\widetilde{K},
\]
where $\widetilde{K}>0$ depends on $\|f^{3/(2k-2)}\|_{C^{2,1}(\mathbb{S}^{n})}$, but is independent of $\inf_{\mathbb{S}^{n}}f$.
Since $\lambda(W)\in\Gamma_{2}$, we have
\[
|\lambda(W)|\leq\sigma_{1}(W)\leq\widetilde{K}\quad \text{on}\ \mathbb{S}^{n}.
\]
This completes the proof of Lemma \ref{thm4.2}.
\end{proof}

We now turn to the $C^{0}$ estimate for $u$.
If $u$ solves \eqref{eqn1.7}, then $u+\ell$ is also a solution for any linear function $\ell$.
To obtain a $C^{0}$ estimate, we impose the following orthogonality condition:
\begin{equation}\label{eqn4.10}
\int_{\mathbb{S}^{n}}x_{i}u=0\quad \text{for}\ i=1,2,\dots,n+1.
\end{equation}
The following lemma is due to Guan-Zhang \cite[Proposition 3.2]{Guan2021}; for completeness, we include its proof in the Appendix.

\begin{lemma}[\cite{Guan2021}]\label{thm4.3}
Let $u\in C^{4}(\mathbb{S}^{n})$ be a $k$-admissible solution of \eqref{eqn1.7} with $\inf_{\mathbb{S}^{n}}f>0$, and assume that $u$ satisfies \eqref{eqn4.10}.
Then 
\[
\|u\|_{C^{0}(\mathbb{S}^{n})}\leq C,
\]
where $C>0$ depends on $\|f^{3/(2k-2)}\|_{C^{2,1}(\mathbb{S}^{n})}$, but is independent of $\inf_{\mathbb{S}^{n}}f$.
\end{lemma}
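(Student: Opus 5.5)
The plan is to derive the $C^{0}$ bound from Lemma \ref{thm4.2}, using the orthogonality condition \eqref{eqn4.10} to remove the kernel of the linearized operator $\Delta+n$. Lemma \ref{thm4.2} gives $|\lambda(W)|\leq C$ on $\mathbb{S}^{n}$, with $C$ independent of $\inf f$. In particular
\[
|\Delta u+nu|=|\sigma_{1}(W)|\leq nC .
\]
So $u$ solves the linear equation $\Delta u+nu=h$ with $\|h\|_{L^{\infty}(\mathbb{S}^{n})}\leq nC$.

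The first step is to expand $u$ in spherical harmonics. The operator $\Delta+n$ on $\mathbb{S}^{n}$ has kernel exactly the degree-one harmonics, which are spanned by the coordinate functions $x_{1},\dots,x_{n+1}$. Condition \eqref{eqn4.10} says precisely that $u$ is $L^{2}$-orthogonal to this kernel.

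The second step is an $L^{2}$ bound. On the orthogonal complement, the eigenvalues of $-\Delta$ are $l(l+n-1)$ with $l\neq1$. Hence $|l(l+n-1)-n|\geq c_{n}>0$: the value is $n$ for $l=0$ and at least $n+2$ for $l\geq2$. This gives
\[
\|u\|_{L^{2}}\leq c_{n}^{-1}\|h\|_{L^{2}}\leq C .
\]

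The third step upgrades to $L^{\infty}$. Write $\Delta u=h-nu$ and apply standard $W^{2,p}$ elliptic estimates on the compact manifold $\mathbb{S}^{n}$:
\[
\|u\|_{W^{2,p}}\leq C\left(\|h-nu\|_{L^{p}}+\|u\|_{L^{p}}\right).
\]
Bootstrap starting from $p=2$, using Sobolev embedding at each step, until $p>n/2$; then $W^{2,p}\hookrightarrow C^{0}$.

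A more elementary alternative avoids Sobolev theory. Since $u$ is smooth and $\lambda(W)$ is bounded, $|u_{ij}+u\delta_{ij}|\leq C$. Integrating along great circles gives an ODE $v''+v=O(1)$ for the restriction $v$ of $u$. Combined with the $L^{2}$ bound, this yields both a gradient and a $C^{0}$ bound.

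The main, though mild, obstacle is making sure the constant never depends on $\inf f$. This holds automatically, because the only input is the eigenvalue bound of Lemma \ref{thm4.2}, which already has this independence. Everything after it is linear theory on $\mathbb{S}^{n}$ with universal constants.
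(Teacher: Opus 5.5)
Your proof is correct. It differs from the paper's in being a direct quantitative estimate, where the paper argues by contradiction via compactness.

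The paper's route is as follows. It assumes $\|u^m\|_{L^\infty}\geq m$ and normalizes $v^m=u^m/\|u^m\|_{L^\infty}$. Lemma \ref{thm4.2} then gives $|\lambda(W_{v^m})|\to0$, hence a uniform $C^2$ bound on $v^m$. Arzel\`a--Ascoli produces a $C^{1,\alpha}$ limit $v$ with $\|v\|_{L^\infty}=1$, $\Delta v+nv=0$, and \eqref{eqn4.10}. Since the kernel of $\Delta+n$ is spanned by $x_1,\dots,x_{n+1}$, this forces $v\equiv0$, a contradiction.

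Both proofs rest on the same two inputs: the $\inf f$-independent bound of Lemma \ref{thm4.2}, and the identification of the kernel of $\Delta+n$. You use the second input quantitatively, through the spectral gap $|l(l+n-1)-n|\geq n$ for $l\neq1$, followed by $W^{2,p}$ bootstrapping.

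What your route buys:
\begin{itemize}
\item An explicit constant.
\item It needs only the upper bound on the trace $\sigma_1(W)=\Delta u+nu$, not the full eigenvalue bound that the paper needs to get $C^2$ compactness.
\end{itemize}
What the paper's route buys is brevity: no spectral-gap or $L^p$ machinery beyond knowing the kernel.

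Your elementary great-circle alternative is also valid, but its key step deserves to be stated explicitly. Fix $x$. Every great circle through $x$ has the same initial data $u(x)$, $\nabla u(x)$. Solving $v''+v=W(\gamma',\gamma')=O(1)$ for $t\in[0,\pi]$ therefore gives $u(y)=\bigl(u(x)\,x+\nabla u(x)\bigr)\cdot y+O(1)$ on all of $\mathbb{S}^n$. So $u$ is within $O(1)$ of a single linear function. Condition \eqref{eqn4.10} alone, even without the $L^2$ bound, then forces $|u(x)|^2+|\nabla u(x)|^2\leq C$. This version does use the full bound $|\lambda(W)|\leq C$, not just the trace.
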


Combining Lemmas \ref{thm4.2} and \ref{thm4.3}, we obtain a $C^{2}$ bound for $u$ on $\mathbb{S}^{n}$ that is independent of $\inf_{\mathbb{S}^{n}}f$.
The existence of a $k$-admissible solution in $C^{1,1}(\mathbb{S}^{n})$ then follows by an approximation argument via solutions of the non-degenerate problems; this yields Theorem \ref{thm1.3}.

\section{Appendix}

\begin{proof}[Proof of Lemma \ref{thm4.1}]
Fix a point $x_{0}\in\mathbb{S}^{n}$ and a unit vector $e\in T_{x_{0}}\mathbb{S}^{n}$.
Let $\gamma(t)$ be the unit-speed geodesic on $\mathbb{S}^{n}$ with $\gamma(0)=x_{0}$ and $\gamma'(0)=e$, and define
\[
h(t):=\tilde{f}(\gamma(t)),\qquad t\in[-\varepsilon,\varepsilon].
\]

Then
\[
h(0)=\tilde{f}(x_{0})>0,\quad h'(0)=\nabla_{e}\tilde{f}(x_{0}),\quad h''(0)=\nabla^{2}_{ee}\tilde{f}(x_{0}),
\]
and
\[
\|h\|_{C^{2,1}([-\varepsilon,\varepsilon])}\leq C\big\|\tilde{f}\big\|_{C^{2,1}(\mathbb{S}^{n})},
\]
where $C$ depends only on $n$.
Finally, applying Lemma \ref{thm2.3} to $h$ and returning to $\tilde{f}$, we complete the proof of Lemma \ref{thm4.1}.
\end{proof}

\par
\vspace{2mm}
\begin{proof}[Proof of Lemma \ref{thm3.1}]
Fix a point $x_{0}\in M$.
Since $(M,\omega)$ is K\"ahler, there exist local holomorphic coordinates $(z^{1},\dots,z^{n})$ centered at $x_{0}$ such that 
\[
g_{i\bar{j}}(x_{0})=\delta_{ij},\quad \frac{\partial g_{i\bar{j}}}{\partial z^{l}}(x_{0})=0,\quad \forall l,
\]
where $g_{i\bar{j}}$ are the components of the metric $\omega$ in these coordinates. 
In particular, at $x_{0}$ the Levi-Civita connection coincides with the coordinate derivatives
\[
\nabla_{i}=\partial_{i},\quad \nabla_{\bar{i}}=\partial_{\bar{i}}.
\]
Let $e\in T_{x_{0}}M$ be a unit vector with respect to $\omega$.
After a unitary change of coordinates, we may assume that
\[
e=\frac{\partial}{\partial z^{1}}\Big|_{x_{0}}.
\]

Now consider the two real directions corresponding to the real and imaginary parts of $e$.
Write $z^{1}=x^{1}+iy^{1}$ and define
\[
u_{1}:=\frac{1}{\sqrt{2}}\frac{\partial}{\partial x^{1}},\quad u_{2}:=\frac{1}{\sqrt{2}}\frac{\partial}{\partial y^{1}}.
\]
Choose $\varepsilon>0$ sufficiently small, depending only on the geometry of $(M,\omega)$, such that the geodesics
\[
\gamma_{1}(t):=\exp_{x_{0}}(t u_{1}),\quad \gamma_{2}(t):=\exp_{x_{0}}(t u_{2}),\qquad t\in[-\varepsilon,\varepsilon],
\]
are well-defined and remain in a coordinate neighborhood of $x_{0}$.
Such an $\varepsilon$ exists by the compactness of $M$.
Define
\[
h_{1}(t):=\tilde{f}(\gamma_{1}(t)),\quad h_{2}(t):=\tilde{f}(\gamma_{2}(t)),\qquad t\in[-\varepsilon,\varepsilon].
\]
Since $\tilde{f}\in C^{2,1}(M)$ and the exponential map is smooth, we have $h_{1},h_{2}\in C^{2,1}([-\varepsilon,\varepsilon])$ and $h_{1},h_{2}>0$.
Moreover, there exists $C=C(M,\omega)>0$ such that
\[
\max\left\{\|h_1\|_{C^{2,1}([-\varepsilon,\varepsilon])}, \|h_2\|_{C^{2,1}([-\varepsilon,\varepsilon])}\right\}\leq C\big\|\tilde{f}\big\|_{C^{2,1}(M)}.
\]

Applying Lemma \ref{thm2.3} to $h_{1}$ and $h_{2}$, we obtain that for any $\alpha<1/2$,
\begin{equation}\label{eqn4.11}
h_{i}''(0)-\alpha\frac{|h_{i}'(0)|^2}{h_{i}(0)}\geq-K_{i}h_{i}^{\frac{1}{3}}(0),\quad \text{for}\ i=1,2,
\end{equation}
where $K_{i}>0$ depends on $\alpha$, $\|h_{i}\|_{C^{2,1}([-\varepsilon,\varepsilon])}$, and $\varepsilon$, but is independent of $\inf_{[-\varepsilon,\varepsilon]}h_{i}$.
In the following, we compute the derivatives of $h_{1}$ and $h_{2}$ at $0$ in terms of $\tilde{f}$.
At $x_{0}$, we have
\begin{align*}
&h_{1}(0)=\tilde{f}(x_{0}),\quad
h_{1}'(0)=\frac{1}{\sqrt{2}}\frac{\partial\tilde{f}}{\partial x^{1}}(x_{0}),\quad
h_{1}''(0)=\frac{1}{2}\frac{\partial^{2}\tilde{f}}{(\partial x^{1})^{2}}(x_{0}),\\
&h_{2}(0)=\tilde{f}(x_{0}),\quad
h_{2}'(0)=\frac{1}{\sqrt{2}}\frac{\partial\tilde{f}}{\partial y^{1}}(x_{0}),\quad
h_{2}''(0)=\frac{1}{2}\frac{\partial^{2}\tilde{f}}{(\partial y^{1})^{2}}(x_{0}).
\end{align*}
Substituting these expressions into \eqref{eqn4.11}, we get at $x_{0}$,
\begin{equation}\label{eqn4.12}
\left[\frac{\partial^{2}\tilde{f}}{(\partial x^{1})^{2}}+\frac{\partial^{2}\tilde{f}}{(\partial y^{1})^{2}}\right]
-\alpha\frac{\big(\frac{\partial\tilde{f}}{\partial x^{1}}\big)^{2}+\big(\frac{\partial\tilde{f}}{\partial y^{1}}\big)^{2}}{\tilde{f}}
\geq-K\tilde{f}^{\frac{1}{3}},
\end{equation}
where $K>0$ depends on $\alpha$, $\big\|\tilde{f}\big\|_{C^{2,1}(M)}$, and the geometry of $(M,\omega)$, but is independent of $\inf_{M}\tilde{f}$.
Moreover, since the metric is K\"ahler and the coordinates are normal, the following identities hold at $x_{0}$:
\begin{equation}\label{eqn4.13}
\nabla_{e}\nabla_{\bar{e}}\tilde{f}=\frac{1}{4}\left[\frac{\partial^{2}\tilde{f}}{(\partial x^{1})^{2}}+\frac{\partial^{2}\tilde{f}}{(\partial y^{1})^{2}}\right],\quad  \big|\nabla_{e}\tilde{f}\big|^{2}=\frac{1}{4}\left[\bigg(\frac{\partial\tilde{f}}{\partial x^{1}}\bigg)^{2}+\bigg(\frac{\partial\tilde{f}}{\partial y^{1}}\bigg)^{2}\right].
\end{equation}
Combining \eqref{eqn4.12} and \eqref{eqn4.13}, we obtain 
\[
\nabla_{e}\nabla_{\bar{e}}\tilde{f}(x_{0})-\alpha\frac{\big|\nabla_{e}\tilde{f}(x_{0})\big|^{2}}{\tilde{f}(x_{0})}\geq-\frac{K}{4}\tilde{f}^{\frac{1}{3}}(x_{0}).
\]
Finally, since $x_{0}$ and $e$ were arbitrary, the estimate holds for all $x\in M$ and all unit vectors $e\in T_{x}M$.
This completes the proof.
\end{proof}

\par
\vspace{2mm}
\begin{proof}[Proof of Lemma \ref{thm4.3}]
The proof is a blow-up argument.
Suppose that no such bound holds.
Then there exist a sequence of functions $\{u^{m}\}_{m=1}^{\infty}$ and a constant $\widetilde{C}>0$, independent of $m$, such that  
\[
\sigma_{k}\big(u^{m}_{ij}+u^{m}\delta_{ij}\big)=f^{m}\quad \text{on}\ \mathbb{S}^{n},
\]
where $f^{m}\geq0$ satisfies
\[
\left\|(f^{m})^{3/(2k-2)}\right\|_{C^{2,1}(\mathbb{S}^{n})}\leq\widetilde{C},
\]
but
\[
\left\|u^{m}\right\|_{L^{\infty}(\mathbb{S}^{n})}\geq m.
\]

Define 
\[
v^{m}:=\frac{u^{m}}{\|u^{m}\|_{L^{\infty}(\mathbb{S}^{n})}}.
\]
Then, for each $m=1,2,\dots$,
\begin{equation}\label{eqn4.14}
\left\|v^{m}\right\|_{L^{\infty}(\mathbb{S}^{n})}=1.
\end{equation}
By Lemma \ref{thm4.2}, we have
\[
|\lambda(W_{u^{m}})|\leq C,
\]
where $C>0$ is independent of $m$.
Consequently,
\begin{equation}\label{eqn4.15}
|\lambda(W_{v^{m}})|\leq\frac{C}{\|u^{m}\|_{L^{\infty}(\mathbb{S}^{n})}}\rightarrow0.
\end{equation}
In particular, we obtain
\[
\Delta v^{m}+nv^{m}\rightarrow0.
\]
It follows from \eqref{eqn4.14} and \eqref{eqn4.15} that $D^{2}v^{m}$ is uniformly bounded; hence, by interpolation,
\[
\|v^{m}\|_{C^{2}(\mathbb{S}^{n})}\leq\overline{C}
\]
for some $\overline{C}$ independent of $m$.
Therefore, after passing to a subsequence $\{v^{m_{i}}\}_{i=1}^{\infty}$, there exist $\alpha\in(0,1)$ and a function $v\in C^{1,\alpha}(\mathbb{S}^{n})$ satisfying \eqref{eqn4.10} such that
\[
v^{m_{i}}\rightarrow v\quad \text{in}\ C^{1,\alpha}(\mathbb{S}^{n})\quad \text{and}\quad \|v\|_{L^{\infty}(\mathbb{S}^{n})}=1.
\]
Moreover, 
\[
\Delta v+nv=0\quad \text{on}\ \mathbb{S}^{n}
\]
in the distributional sense.
By linear elliptic theory, $v$ is in fact smooth.
Since $v$ satisfies the orthogonality condition \eqref{eqn4.10}, we conclude that $v\equiv0$ on $\mathbb{S}^{n}$, which contradicts $\|v\|_{L^{\infty}(\mathbb{S}^{n})}=1$.
This completes the proof of Lemma \ref{thm4.3}.
\end{proof}

\end{document}